\pgfplotsset{compat=1.18} 
\newcommand{\smallblacksquare}{\scalebox{0.6}{$\blacksquare$}}
\newcommand*{\R}{\mathbb{R}}
\newcommand{\ascal}[2]{\left\langle #1, #2 \right\rangle}
\renewcommand{\aa}[1]{\left\{ #1 \right\}}
\definecolor{antiquefuchsia}{rgb}{0.57, 0.36, 0.51}
\definecolor{MyViolet}{rgb}{0.45,0.08,0.95}
\definecolor{MyBrown}{rgb}{0.45,0.08,0}
\definecolor{MyDarkBlue}{rgb}{0,0.08,0.45}
\newtheorem{thm}{Theorem}[section]
\newtheorem{theorem}[thm]{Theorem}
\newtheorem{lemma}[thm]{Lemma}
\newtheorem{prop}[thm]{Proposition}
\newtheorem{definition}[thm]{Definition}
\newtheorem{rem}[thm]{Remark}
\newcommand{\beq}{\begin{equation}}
\newcommand{\eeq}{\end{equation}}
\newcommand{\beqa}{\begin{eqnarray}}
\newcommand{\eeqa}{\end{eqnarray}}
\newcommand{\beqas}{\begin{eqnarray*}}
\newcommand{\eeqas}{\end{eqnarray*}}
\newcommand{\bi}{\begin{itemize}}
\newcommand{\ei}{\end{itemize}}
\newcommand{\nn}{\nonumber}
\newcommand{\dom}{\mathrm{dom}\,}
\newcommand{\argmin}{\mathrm{argmin}\,}
\newcommand{\bConv}[1]{\overline{\mbox{\rm Conv}}\,(\R^{#1})}
\newcommand{\mConv}[1]{\overline{\mbox{\rm Conv}}_\mu\,(\R^{#1})}
\begin{document}
	\title{Stochastic Quadratic Dynamic Programming}
\date{}
 
	 \maketitle

\begin{center}
\begin{tabular}{ccc}
\begin{tabular}{c}
Vincent Guigues\\
School of Applied Mathematics, FGV\\
Praia de Botafogo, Rio de Janeiro, Brazil\\
{\tt vincent.guigues@fgv.br}
\end{tabular}&
&
\begin{tabular}{c}
Adriana Washington\\
School of Applied Mathematics, FGV\\
Praia de Botafogo, Rio de Janeiro, Brazil\\
{\tt adriana.washington07@gmail.com}
\end{tabular}
\end{tabular}
\end{center}

	\begin{abstract}
We introduce an algorithm called SQDP (Stochastic Quadratic Dynamic Programming) to solve some multistage stochastic optimization problems having strongly convex recourse functions.
The algorithm extends the classical Stochastic Dual Dynamic Programming
(SDDP) method replacing affine cuts by quadratic cuts.
We provide conditions ensuring strong convexity
of the recourse functions and prove the convergence of SQDP. 
In the special case of a single stage deterministic problem, we call
QCSC (Quadratic Cuts for Strongly Convex optimization) the method and prove its complexity.
Numerical experiments
illustrate the performance and correctness of
SQDP, with SQDP being much quicker than SDDP for large values of the constants
of strong convexity both for a multistage problem and a two-stage assembly recourse model. We also present the results of numerical experiments on deterministic problems where QCSC is much quicker than several
popular competing optimizers for solving 
6 strongly convex optimization problems from the literature.\\
\end{abstract}

		{\bf Keywords.} Stochastic programming, SDDP, strongly convex value function.\\
		
		{\bf AMS subject classifications:} 90C15, 90C90\\ 
	 
	\section{Introduction}\label{sec:intro}

In this paper, we introduce 
the Stochastic Quadratic Dynamic Programming (SQDP) method to solve
Multistage Stochastic Programs (MSPs) having strongly convex recourse functions.
To the best of our knowledge this method has not been proposed so far. This positions our research in the area of solution methods for MSPs. 
The most popular solution method 
for MSPs is the Stochastic Dual Dynamic Programming (SDDP) method, introduced in Pereira and Pinto (1991). SDDP is a sampling-based extension of the 
Nested Decomposition algorithm Birge (1985) which builds policies for some multistage stochastic convex problems.
It has been used to solve many real-life problems and several extensions of the method have been considered 
such as DOASA Philpott and Guan (2008), CUPPS Chen and Powell (1999), ReSA Hindsberger and Philpott (2001), AND Birge and Donohue (2001), risk-averse variants (Guigues and R\"omisch (2012a), Guigues and R\"omisch (2012b), Philpott and de Matos (2012), Guigues (2014), Shapiro (2011), Shapiro et al. (2009), Kozmik and Morton (2015)), inexact variants Guigues et al. (2021),
variants for problems with a random
number of stages Guigues (2021), or statistical
upper bounds Guigues et al. (2023), see also references therein and Shapiro (2011).
SDDP builds approximations for the cost-to-go functions which take the form of a maximum of
affine functions called cuts.

We propose an extension of SDDP method called SQDP (Stochastic Quadratic Dynamic Programming), which is a  
 Decomposition Algorithm for multistage stochastic programs having strongly convex cost functions.
Similarly to SDDP, at each iteration the algorithm computes in a forward pass a sequence of trial points which are used
in a backward pass to build lower bounding functions called cuts. However, contrary to SDDP where cuts are affine functions,
the cuts computed with SQDP   are quadratic functions
and therefore
the cost-to-go functions are approximated by a maximum of quadratic functions.
We show that for the MSPs considered the recourse functions are strongly convex
and that the quadratic cuts computed by SQDP are valid cuts, i.e., are lower
bounding functions for the corresponding cost-to-go functions.
We also show the convergence of SQDP and present numerical simulations showing the correctness of SQDP and comparing the performance of SQDP and SDDP on a simple MSP. On our experiments, SQDP is much quicker than SDDP for large values of the constants
of strong convexity.
For two-stage stochastic linear programs with continuous distributions, \cite{SCHULTZ19943} provides conditions ensuring that the recourse function is strongly convex. To solve such problems, we can therefore extend naturally SQDP
incorporating in Stochastic Decomposition \cite{stochasti_decomposition_higle_sen_91} 
quadratic cuts instead of affine cuts to approximate
the recourse function.
We consider in Section \ref{two_stage_example}
the application of this natural extension of SQDP for a two-stage
problem which is a multiproduct assembly model.

To motivate our method, we start our exposition with the case of 
deterministic strongly convex optimization problems and derive a method also based
on the computation of lower bounding quadratic cuts for the objective function. More precisely, given $\tilde f: \R^{n} \rightarrow \R\cup \{ +\infty \}$, a  proper lower semi-continuous $\mu$-convex function, we consider the problem
 \begin{equation}
\label{pb:initial}
 \tilde f_* =   \min \left\{\tilde f(x) \;|\; x \in X\right\}
\end{equation}
for a compact convex set $X$. For this problem, we consider an extension of Kelley's cutting plane method
Kelley (1960) which consists in replacing the affine cuts in Kelley's algorithm by valid lower bounding quadratic cuts.
We call QCSC (Quadratic Cuts for Strongly Convex problems) the corresponding method, which, to the best of our knowledge, is new.
We also prove the complexity of QCSC.

\par The outline of the paper is as follows. In Section \ref{motivating}, we propose a simple algorithm
for minimizing a strongly convex function which builds a model
of the objective on the basis of quadratic lower approximations
of this objective computed at trial points. In this section, we also propose
a reformulation of QCSC in terms of affine cuts plus a quadratic term
independent on the trial points. 
This reformulation allows us to
adapt the complexity from 
Liang and Monteiro (2023) to obtain the complexity of QCSC.
In Section \ref{sec:num_qcsc}
we show  that QCSC 
converges much quicker than several competing methods (limited memory Proximal Bundle Method \cite{kiw90,ds16}; the Level Bundle Method  \cite{LemarechalNewVariantsBundle1995_lvl}; the universal proximal bundle method \cite{guigues2024universal_upb}; the Subgradient with Restart method  \cite{RenegarGrimmer2022_subgrad_rs}) for 6 strongly convex minimization problems. In Section \ref{sec:pbformass},
we define the class of MSPs with strongly convex cost functions we are interested to solve. In particular, in this section, we show the strong convexity of the corresponding recourse functions.
In Section \ref{sec:SQDP}, we present our
SQDP algorithm. Section 
\ref{convanalysis} is dedicated to the
convergence analysis of SQDP.
Numerical experiments comparing
SQDP with SDDP on a MSP are given in
Section \ref{sec:num}.
In this section, we also
consider a real-life two-stage application
and solve this problem using Stochastic Decomposition that uses affine cuts as well as the adaptation of SQDP to this setting using quadratic cuts.

Concluding remarks and possible extensions are discussed in Section \ref{sec:conc}.

    \subsection{Basic definitions and notation.} Let $\R$ denote the set of real numbers.
    Let $ \R_+ $ and $ \R_{++} $ denote the set of non-negative real numbers and the set of positive real numbers, respectively.
	Let $\R^n$ denote the standard $n$-dimensional Euclidean space equipped with  inner product and norm denoted by $\left\langle \cdot,\cdot\right\rangle $
	and $\|\cdot\|$, respectively. 
	Let $\log(\cdot)$ denote the natural logarithm.

	Let $\Psi: \R^n\rightarrow (-\infty,+\infty]$ be given. Let $\dom \Psi:=\{x \in \R^n: \Psi (x) <\infty\}$ denote the effective domain of $\Psi$.
 We say that
	$\Psi$ is proper if $\dom \Psi \ne \emptyset$.
	A proper function $\Psi: \R^n\rightarrow (-\infty,+\infty]$ is $\mu$-convex for some $\mu \ge 0$ if
	$$
	\Psi(\alpha z+(1-\alpha) z')\leq \alpha \Psi(z)+(1-\alpha)\Psi(z') - \frac{\alpha(1-\alpha) \mu}{2}\|z-z'\|^2
	$$
	for every $z, z' \in \dom \Psi$ and $\alpha \in [0,1]$.
 The set of all proper lower semicontinuous $\mu$-convex functions is denoted by $\mConv{n}$.
 When $\mu=0$, we simply denote
    $\mConv{n}$ by $\bConv{n}$.
The subdifferential of $ \Psi $ at $z \in \dom \Psi$ is denoted by
	\begin{equation}
        \partial \Psi (z):=\left\{ s \in\R^n: \Psi(z')\geq \Psi(z)+\left\langle s,z'-z\right\rangle, \forall z'\in\R^n\right\}.
        \end{equation}
For a set 
$\mathcal{X}$, its $\varepsilon$-enlargement $\mathcal{X}^{\varepsilon}$
is given by
$\mathcal{X}^{\varepsilon}=\{x: \mbox{dist}(x,\mathcal{X}) \leq \varepsilon\}$.

\section{QCSC algorithm}\label{motivating}

\subsection{Statement of the algorithm}

We consider  optimization problem 
\eqref{pb:initial}
where $\tilde f$ is a convex lower-semicontinuous function, subdifferentiable on $X$ a nonempty compact convex subset of $\R^n$.
The Kelley cutting plane method applied to problem~\eqref{pb:initial} consists in approximating the convex function $\tilde f$ 
by a polyhedral approximation that we iteratively refine. 
More precisely, let $\tilde f'$ define a selection of $\partial \tilde f$, i.e., a function such that, for all $x\in X$, $\tilde f'(x) \in \partial \tilde f(x)$.
Further,  denote $\ell_{\tilde f}(\cdot,x):= \tilde f(x) + \ascal{\tilde f'(x)}{\cdot - x}$ , for any $x \in X$, the tangent of $\tilde f$ at $x$ of slope $\tilde f'(x)$.
In particular, as $\tilde f$ is convex we have $\ell_{\tilde f}(\cdot,x) \leq \tilde f$, and any trial point $x_k \in X$ defines a so-called (affine) cut $\ell_{\tilde f}(\cdot,x_k) \leq \tilde f$.
Then, the Kelley algorithm, given a collection of trial points $(x_j)_{j=0,\ldots,k-1}$ at iteration $k$,
defines  a model $\Gamma_k$ of $\tilde f$ as the maximum of past cuts, i.e., $ \max_{j=0,\ldots,k-1} \ell_{\tilde f}(\cdot,x_j) \leq \tilde f$. 
The next trial point $x_{k}$ is defined as a minimum of $\Gamma_k$ over $X$. 
This is detailed in~
Algorithm \ref{alg:Kelley}, where we can note that, at each iteration, we have a candidate solution $y_k  \in X$ (the best solution found so far among the trial points), and an optimality gap $t_k \geq 0$. 

\begin{algorithm}
\par {\textbf{Inputs:}} $x_0 \in X$, $\bar \varepsilon>0$.\\
$\Gamma_1(\cdot) = \ell_{\tilde f}(\cdot,x_0) $;\\
$t_0=+\infty$;\\
$k=1$;
\par {\textbf{while }}$t_{k-1}>\bar \varepsilon$,\\
    $\hspace*{0.7cm}x_{k} \in \argmin_{x \in X} \Gamma_k(x) $ ;\\
    $\hspace*{0.7cm}y_k \in \argmin \{{\tilde f}(x): x \in \{x_0,x_1,\ldots,x_k\}\}$;\\
    $\hspace*{0.7cm}t_{k} = {\tilde f}(y_{k}) - \Gamma_k(x_{k})$;\\
   $\hspace*{0.7cm}\Gamma_{k+1}(\cdot) \leftarrow \max \aa{\Gamma_{k}(\cdot),\ell_{\tilde f}(\cdot,x_{k})}$;\\
   $\hspace*{0.7cm}k \leftarrow k+1$;
\par {\textbf{end}}
\par {\textbf{Outputs:}} approximate solution $y_{k-1}$, gap $t_{k-1}$.\\
\vspace*{0.3cm}
\caption{Kelley's cutting plane algorithm}\label{alg:Kelley}
\end{algorithm}

This algorithm is well-known and studied but only leverages the convexity of $f$. 
Assume now that $\tilde f$ is $\mu$-convex on $X$.
Then we can replace in Kelley's cutting plane  algorithm
the affine cuts $\ell_{\tilde f}(\cdot,x_k)$ by the
quadratic cuts 
\begin{equation}
\label{eq:qf}
    q_{\tilde f}(\cdot,x_k)= \underbrace{{\tilde f}(x_k) + \ascal{{\tilde f}'(x_k)}{\cdot - x_k}}_{\ell_{\tilde f}(\cdot,x_k)} + \frac{\mu}{2}\| \cdot - x_k\|^2 \leq \tilde f(\cdot),
\end{equation}
yielding a new, to our knowledge, algorithm that we call QCSC (Quadratic Cuts for Strongly Convex optimization).
Notice that inequality \eqref{eq:qf} 
directly follows from
the strong convexity of
$\tilde f$ and shows that
quadratic approximations
$q_{\tilde f}$ of $\tilde f$ are better
than affine approximations
$\ell_{\tilde f}$.

QCSC is given in Algorithm
\ref{alg:qcsc}.

\begin{algorithm}
\par {\textbf{Inputs:}} $x_0 \in X$, $\bar \varepsilon>0$.\\
$\Gamma_1(\cdot) =q_{\tilde f}(\cdot,x_0)$;\\
$t_0=+\infty$;\\
$k=1$;
\par {\textbf{while }}$t_{k-1}>\bar \varepsilon$,\\
    $\hspace*{0.7cm}x_{k} \in \argmin_{x \in X} \Gamma_k(x) $ ;\\
    $\hspace*{0.7cm}y_k \in \argmin \{{\tilde f}(x): x \in \{x_0,x_1,\ldots,x_k\}\}$;\\
    $\hspace*{0.7cm}t_{k} = {\tilde f}(y_{k}) - \Gamma_k(x_{k})$;\\
   $\hspace*{0.7cm}\Gamma_{k+1}(\cdot) \leftarrow \max \aa{\Gamma_{k}(\cdot),q_{\tilde f}(\cdot,x_k)}$;\\
   $\hspace*{0.7cm}k \leftarrow k+1$;
\par {\textbf{end}}
\par {\textbf{Outputs:}} approximate solution $y_{k-1}$, gap $t_{k-1}$.\\
\vspace*{0.3cm}
\caption{QCSC algorithm}\label{alg:qcsc}
\end{algorithm}

As an illustration, we implemented QCSC with
$X=[-10,10]$ and
one-dimensional objective function 
$$
\tilde f(x)=\max(1000(x-4)^2+2,1000(x+5)^2+8,500(x-3)^2+6),
$$ 
which is nondifferentiable and strongly convex.
For starting point $x_0=8$, the approximate optimal value stabilized
after 4 iterations and the objective function together with the first two quadratic approximations
$q_{{\tilde f}}(\cdot,x_0)$
and 
$q_{{\tilde f}}(\cdot,x_1)$
computed by QCSC (the corresponding model being the maximum of these two quadratic cuts) are represented in Figure \ref{fig:QCC}.

\begin{figure}
    \centering
\includegraphics[width=0.75\textwidth]{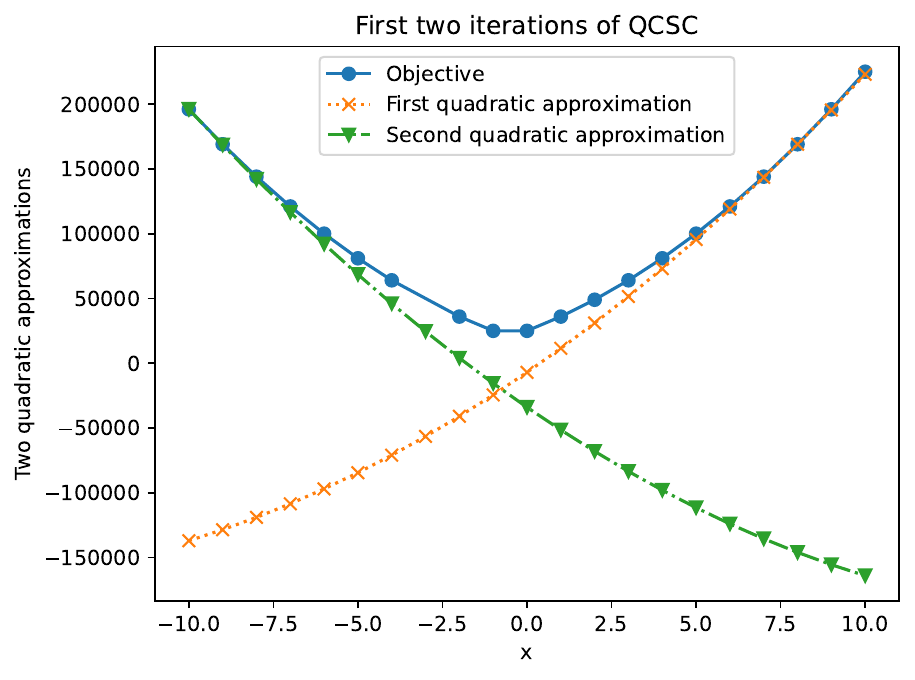}
    \caption{Two iterations of QCSC. The first quadratic approximation is given in orange while the second quadratic approximation is given in green. The model at the second iteration is the maximum of the two quadratic cuts. The objective function which is approximated is the strongly convex function in blue.}
    \label{fig:QCC}
\end{figure}

\subsection{A reformulation of QCSC}

In this section, we provide a reformulation of
 QCSC in terms of affine (instead of quadratic in the
 original QCSC formulation) cuts plus a quadratic
 term $(\mu/2)\|\cdot\|^2$ independent on the trial points.
We first write the original problem \eqref{pb:initial} under the form
\begin{equation}\label{reform0}
\min_{x \in \mathbb{R}^n} \tilde f(x) + \tilde h(x)
\end{equation}
where 
$\tilde h(x)=\delta_X(x)$ is the indicator function of set $X$.
We then define functions $f$ and
$h$ by
\begin{equation}\label{defftildehtilde}
f(x)=\tilde f(x)-\frac{\mu}{2}\|x\|^2 \mbox{ and }h(x)=\tilde h(x)+\frac{\mu}{2}\|x\|^2.
\end{equation}
Observe that problem \eqref{pb:initial}
can be written
\begin{equation}\label{reformcomposite}
\min_{x \in \mathbb{R}^n} \;f(x) + h(x).
\end{equation}
Since $\tilde h$ is convex we have that
$h$
is $\mu$-strongly convex. We also have that
$\tilde f \in \mConv{n}$
is $\mu$-convex
and therefore
$f$ is convex.

Note that
there is $\tilde f'(x) \in \partial \tilde f(x)$
and 
$f'(x) \in \partial f(x)$
such that 
$f'(x)=\tilde f'(x)-\mu x$ and we can write
\begin{equation}\label{reformlincut}
\begin{array}{lcl}
\ell_f(u,x)+h(u) & = & f(x) + \langle f'(x),u-x\rangle +h(u) \\
& = & f(x) + \langle {\tilde f}'(x)-\mu x,u-x\rangle +\tilde h(u) +\frac{\mu}{2}\|u\|^2\\
& = & \tilde f(x)-\frac{\mu}{2}\|x\|^2 + \langle {\tilde f}'(x),u-x\rangle - \mu\langle x, u-x \rangle +\tilde h(u) +\frac{\mu}{2}\|u\|^2\\
&=& q_{\tilde f}(u,x)+\tilde h(u),
\end{array}
\end{equation}
which shows that the model written in terms
of quadratic cuts $q_{\tilde f}(u,x)$
can be written in terms
of affine cuts 
$\ell_f(u,x)$ 
written for $f$ instead
of $\tilde f$ but also
replacing $\tilde h$ by $h$, which is now $\mu$-convex, i.e., adding to the affine cuts a quadratic term $(\mu/2)\|\cdot\|^2$ independent on the trial
points.
We see that we have transferred
the $\mu$-convexity from $\tilde f$ to
$h$ 
and the convexity of $\tilde h$
to $f$ for which affine cuts
can be computed.
Recall that 
the objective function of composite
optimization problem \eqref{reform0} (which is equivalent to the original problem \eqref{pb:initial}) is the sum
of a convex function $\tilde f$ which is
approximated in QCSC  by a model 
constructed from quadratic cuts
$q_{\tilde f}(\cdot,x)$ of $\tilde f$
and of convex function $\tilde h$.
This problem can also be expressed
as composite optimization problem
\eqref{reformcomposite}
with objective function which is
the sum of a $\mu$-convex
function $h$ and
convex function $f$. We obtain the reformulation of
QCSC given in Algorithm \ref{alg:qcsc2} below.

\begin{algorithm}
\par {\textbf{Inputs:}} $x_0 \in X$, $\bar \varepsilon>0$.\\
$\Gamma_1(\cdot) =\ell_{f}(\cdot,x_0) +\frac{\mu}{2}\|\cdot\|^2$;\\
$t_0=+\infty$;\\
$k=1$;
\par {\textbf{while }}$t_{k-1}>\bar \varepsilon$,\\
    $\hspace*{0.7cm}x_{k} \in \argmin_{x \in X} \Gamma_k(x) $ ;\\
    $\hspace*{0.7cm}y_k \in \argmin \{{\tilde f}(x): x \in \{x_0,x_1,\ldots,x_k\}\}$;\\
    $\hspace*{0.7cm}t_{k} = {\tilde f}(y_{k}) - \Gamma_k(x_{k})$;\\
   $\hspace*{0.7cm}\Gamma_{k+1}(\cdot) \leftarrow \max \aa{\Gamma_{k}(\cdot),\ell_{f}(\cdot,x_k) +\frac{\mu}{2}\|\cdot\|^2}$;\\
   $\hspace*{0.7cm}k \leftarrow k+1$;
\par {\textbf{end}}
\par {\textbf{Outputs:}} approximate solution $y_{k-1}$, gap $t_{k-1}$.\\
\vspace*{0.3cm}
\caption{Reformulation of QCSC algorithm}\label{alg:qcsc2}
\end{algorithm}

\subsection{Complexity analysis of QCSC}

Our complexity analysis of QCSC 
is based on the reformulation of
QCSC given in Algorithm \ref{alg:qcsc2}
that uses function $f$ given by
\eqref{defftildehtilde}.
Our analysis closely follows the complexity of the bundle
method from Liang and Monteiro (2023). However, some adaptations
are necessary due to the fact that there is an additional prox-term
in Liang and Monteiro (2023) which is not present in QCSC (since cuts
are already quadratic in QCSC) and we do not use any black-box. Even without prox-term
and due to the presence of quadratic cuts, the complexity we obtain
for QCSC is similar to the complexity of the bundle methods from
Liang and Monteiro (2023). 

We assume that the following conditions
hold
	for some triple
	$(L_f, M_f,\mu) \in \R_+^3 $:
	\begin{itemize}
		\item[(A1)]
		$\tilde f$ is $\mu$-convex for some $\mu>0$, 
		$X \subset \dom \tilde f$, and a subgradient oracle, i.e.,
		a function $\tilde f':X \to \R^n$
		satisfying $\tilde f'(x) \in \partial \tilde f(x)$ for every $x \in X$, is available;
		\item[(A2)]
		the set of optimal solutions $X^*$ of
		problem \eqref{pb:initial} is nonempty;
		\item[(A3)]
		for every $u,v \in \dom h$,
		\begin{equation}
		\|f'(u)-f'(v)\| \le 2M + L \|u-v\|,
		\end{equation}
        where $f'(x)=\tilde f'(x)-\mu x$.
	\end{itemize}

    It is well-known that (A3) implies that for every $u,v \in \dom h$,
	\begin{equation}\label{ineq:est}
	f(u)-\ell_f(u;v) \le 2M \|u-v\| + \frac{L}{2}\|u-v\|^2.
	\end{equation}
    
Indeed, relation \eqref{ineq:est} comes from the relations
\begin{align*}
f(u)-\ell_f(u;v) &=
\displaystyle \int_{0}^1
\left \langle u-v,f'(v+t(u-v))-f'(v) \right \rangle dt\\
& \leq \displaystyle \int_{0}^1 \|u-v\|\Big(
2M + L\|u-v\|t \Big)dt\\
&=2M \|u-v\| + \frac{L}{2}\|u-v\|^2,
\end{align*}
where the  inequality comes from
(A3) and
Cauchy-Schwarz inequality.

In addition to the above quantities, we introduce
the
	distance of initial
	point $x_0$ to $X^*$:
\begin{equation}\label{def:d0}
	d_0 := \|x_0-x_0^*\| \ \ \mbox{\rm where}  \ \ \ 
	x_0^* := \argmin \{\|x_0-x^*\|: x^*\in X^*\}.
\end{equation}

		 Finally,
for given initial point $x_0 \in X$
and tolerance $\bar \varepsilon>0$, it is said that an algorithm for solving \eqref{pb:initial}
has $\bar \varepsilon$-iteration complexity ${\cal O}(N)$ if its total number of iterations until it obtains a $ \bar \varepsilon$-optimal solution
 is bounded by 
 $C(N+1)$ where $C>0$ is a
universal constant. 

The next theorem gives the
 complexity of QCSC.
 
    	\begin{theorem}[Complexity of QCSC]\label{complcssc} 
    Assume that the domain of $X$
    is bounded with diameter $D>0$ and let $\bar \varepsilon>0$. 
    Then if Assumptions (A1), (A2), and (A3) hold and if
\begin{equation}\label{condj}
k \geq 1 + \left[1+ \frac{8\left( M^2+\bar \varepsilon L \right)}{\mu \bar \varepsilon}
\right]\log \left( \frac{4 \bar t(D)}{3 \bar \varepsilon}  \right)
\end{equation}
where 
\begin{equation}\label{def:bar t}
		    \bar t(D):= M^2 + 
       \left( \frac{{L}}{2} +1\right) D^2.
		\end{equation}
we have $t_k \leq \bar \varepsilon$ and
QCSC finds an $\bar \varepsilon$-optimal solution
of \eqref{pb:initial} in at most 
$$
1 + \left[1+ \frac{8\left( M^2+\bar \varepsilon L \right)  }{\mu \bar \varepsilon}
\right]\log \left( \frac{4 \bar t(D)}{3 \bar \varepsilon}  \right)
$$
iterations.
	\end{theorem}

The
proof of Theorem
\ref{complcssc}
is given in Appendix~\ref{appendix_qcsc_complexity}.

\section{Numerical Experiments for QCSC}\label{sec:num_qcsc}
In this section, we report numerical experiments illustrating the practical performance of QCSC comparing 
this method 
with methods from the literature on a collection of test problems. For all methods, the stopping criterion is based on the computation of an $\varepsilon$-solution, and we report both CPU time and the number of iterations required to reach it.

All algorithms were implemented in Julia and executed on a single thread. When needed, MOSEK was used to solve the convex optimization subproblems arising within the methods. When the optimal value was required, it was computed using IPOPT. All numerical experiments in this section were conducted on a machine equipped with an Apple M1 processor and 8~GB of RAM.

We consider strongly convex reformulations of benchmark problems from the literature, obtained by adding a quadratic regularization term to their standard formulations. The methods included in our comparison are the limited memory Proximal Bundle Method, originally proposed in \cite{kiw90} and implemented following the recommendations in \cite{ds16}, the Level Bundle Method (LVL) \cite{LemarechalNewVariantsBundle1995_lvl}, the universal proximal bundle method (U-PB) \cite{guigues2024universal_upb}, the Subgradient with Restart method (Subgrad RS) \cite{RenegarGrimmer2022_subgrad_rs}, and the method proposed in this paper: QCSC. For methods requiring parameter tuning, we use the following settings: for LVL, $\lambda = 0.5$; for PBM-1, $\lambda_0 \in \{10.0, 1.0, 10^{-3}\}$, $\lambda_{\min} \in \{10^{-4}, 10^{-6}\}$, $a = 2.0$, and $k = 0.1$; and for U-PB, $\lambda_0 \in \{0.1, 1.0\}$, $\chi = 0.5$, and $\bar{N} \in \{5, 20\}$. Consider $\mu > 0$ the strong convexity parameter and $x_0 \in\mathbb{R}^n$ the initial point. The code for these experiments can be found at \url{https://github.com/AdrianaWH/Numerical-Experiments-QCSC-method}.

\subsection{Strongly Convex BadGuy}\label{sec:num_qcsc_badguy}

We consider a strongly convex variant of the BadGuy problem, a classical pathological example for cutting-plane methods introduced in \cite{Hiriart-UrrutyConvexAnalysisMinimization1993} and used in \cite{SagastizabalCompositeProximalBundle2013}. As in the original formulation, the feasible set is the unit ball of $\mathbb{R}^n\times\mathbb{R}$. More precisely, for $\varepsilon\in(0,1/2)$, we consider the minimization over $B=\left\{(y,\eta)\in\mathbb{R}^n\times\mathbb{R}:\|y\|_2^2+\eta^2\le 1\right\}$ of the function
\begin{equation}
f_\mu(y,\eta):=
\max\left\{|\eta|,\,-1+2\varepsilon+\|y\|_2\right\}
+\frac{\mu}{2}\left(\|y\|_2^2+\eta^2\right).
\end{equation}
Hence, the strongly convex term is added to the original BadGuy objective while keeping the same theoretical feasible set. The corresponding results  are reported in Table \ref{tab_badguy}. It can be seen that QCSC achieves the best performance
with the smallest computational time and converging in just two iterations.

\begin{table}[H]
\centering
\begin{tabular}{|c|c|c|}
\hline
\textbf{Method} & Time(s) & Iterations \\
\hline
LVL         & 4.2964 & 29 \\
\hline
PBM-1       & 0.4865 & 12 \\
\hline
QCSC & 0.0509 & 2 \\
\hline
SUBGRAD RS  & 0.1206 & 5277 \\
\hline
U-PB        & 0.4942 & 2  \\
\hline
\end{tabular}
\caption{Results for BadGuy Strongly Convex ($\varepsilon = 10^{-6},\; n = 10, \mu = 1000.0, x_0 = (1,\ldots,1)$).}
\label{tab_badguy}
\end{table}

\subsection{Strongly Convex CB3 II}\label{sec:num_qcsc_cb3ii}

We consider the reformulation of the Chained CB3 II problem, a classical nonsmooth optimization problem introduced in \cite{CharalambousNonlinearMinimaxOptimization1976} and reported in \cite{HaaralaNewLimitedMemory2004}. More precisely, we minimize
\begin{equation}
\tilde{f}_2(x) := \max \left\{ \sum_{i=1}^{n-1}(x_i^4 + x_{i+1}^2), \sum_{i=1}^{n-1} (2-x_i)^2 + (2-x_{i+1})^2, \sum_{i=1}^{n-1} 2\exp(-x_i + x_{i+1})  \right\} + \frac{\mu}{2}\|x\|^2.
\end{equation}
In the numerical experiments, we consider the initial point $x_0 = 0$. The corresponding results are reported in Table \ref{tab_cb3}, where it is possible to verify that  QCSC maintained superior performance across all tested dimensions, with the smallest CPU times and a reduced number of iterations, in addition to demonstrating good scalability with increasing $n$.

\begin{table}[H]
\centering
\begin{tabular}{|c|c|c|c|c|c|c|}
\hline
 & \multicolumn{2}{c|}{$n = 500$} 
 & \multicolumn{2}{c|}{$n = 1000$} 
 & \multicolumn{2}{c|}{$n = 2000$} \\
\hline
\textbf{Method} 
& Time(s) & Iterations 
& Time(s) & Iterations 
& Time(s) & Iterations \\
\hline
LVL         & 6.5599 & 41 & 6.4692 & 62 & 7.2641 & 38 \\
\hline
PBM-1       & 0.4602 & 8 & 0.4549 & 8 & 0.5226 & 8 \\
\hline
QCSC & 0.0293 & 6 & 0.0542 & 7 & 0.1317 & 7 \\
\hline
SUBGRAD RS  & 0.118 & 715 & 0.1981 & 1429 & 0.9308 & 2859 \\
\hline
U-PB        & 0.5307 & 22 & 0.6603 & 25 & 0.892 & 26 \\
\hline
\end{tabular}
\caption{Results for CB3 II Strongly Convex ($\varepsilon = 10^{-1}, \mu = 10.0, x_0 = (0,\ldots,0)$).}
\label{tab_cb3}
\end{table}

\subsection{Strongly Convex MaxQuad}\label{sec:num_qcsc_maxquad}

We consider a reformulation of the well-known MaxQuad problem, commonly used to test methods for nondifferentiable convex optimization, introduced by Lemaréchal et al. in \cite{bonnansNumericalOptimizationTheoretical2006_maxquad}. More precisely, we study the problem, with $n=10$, $N=5$, of minimizing over $\mathbb{R}^n$:
\begin{equation}
\tilde f_3(x) = \max_{1 \leq \ell \leq N} \left( \langle A_{\ell} x, x \rangle + \langle b_{\ell}, x \rangle \right) + \frac{\mu}{2} \|x\|^2,
\end{equation}
 where the vectors $b_\ell$ and matrices $A_\ell$ are defined as in the standard 
 MaxQuad problem
 formulation.
\\
 The corresponding results for this strongly convex formulation are reported in Table \ref{tab_maxquad}. Once again, QCSC shows the best performance among the tested methods.
However, it is important to highlight the reduction in CPU time and number of iterations as the value of $\mu$ increases.

\begin{table}[H]
\centering
\begin{tabular}{|c|c|c|c|c|c|c|}
\hline
 & \multicolumn{2}{c|}{$\mu = 1.0$} 
 & \multicolumn{2}{c|}{$\mu = 10.0$} 
 & \multicolumn{2}{c|}{$\mu = 100.0$} \\
\hline
\textbf{Method} 
& Time(s) & Iterations 
& Time(s) & Iterations 
& Time(s) & Iterations \\
\hline
LVL         & 4.7475 & 142 & 5.4057 & 146 & 4.7954 & 133 \\
\hline
PBM-1       & 0.5828 & 95 & 0.5274 & 76 & 0.5721 & 65 \\
\hline
QCSC & 0.5043 & 194 & 0.0799 & 43 & 0.0595 & 15 \\
\hline
SUBGRAD RS  & 1.0778 & 26981 & 1.4678 & 33399 & 0.5083 & 12629 \\
\hline
U-PB        & 24.015 & 15528 & 20.0802 & 12713 & 8.5642 & 5231 \\
\hline
\end{tabular}
\caption{Results for Strongly Convex MaxQuad ($\varepsilon = 10^{-4}, n = 10, x_0 = (1,\ldots,1)$).}
\label{tab_maxquad}
\end{table}

\subsection{Strongly Convex MXHILB}\label{sec:num_qcsc_mxhilb}

We consider a reformulation of the MXHILB problem, associated with the computation of the kernel of the Hilbert matrix and introduced by Kiwiel \cite{KiwielEllipsoidTrustRegion1989}. We study the problem of minimizing over $\mathbb{R}^n$ the function
\begin{equation}
\tilde{f}_4(x) := \max_{1 \leq i \leq n} \left\{ |x_i| \sum_{j=1}^n \frac{1}{i+j-1} \right\} + \frac{\mu}{2}\|x\|^2.
\end{equation}
The corresponding results are presented in Figure \ref{result_mxhilb}, where it is possible to observe consistent behavior for QCSC, whose CPU times remain stable despite variations in $n$, with its advantage becoming more pronounced as $n$ increases. The results for PBM-1 are not included in the figure, as the method faces convergence problems for values of $n$ greater than 3000 given the fixed parameter configuration that was used for all methods to ensure a fair comparison. 
\begin{figure}[H] 
    \centering
\includegraphics[width=0.70\textwidth]{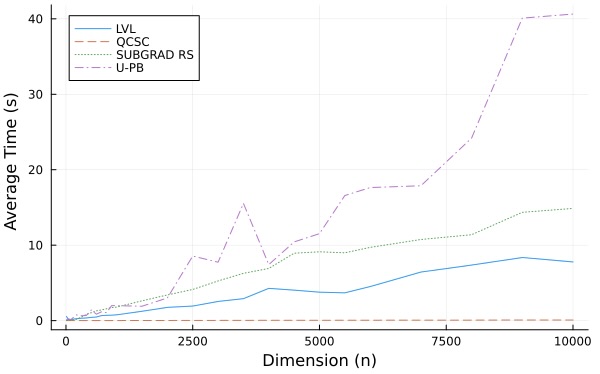}
    \caption{CPU time (in seconds) comparison for dimensions $n$ in \{10, 50, 100, 200, 300, 400, 500, 600, 700, 800, 900, 1000, 1500, 2000, 2500, 3000, 3500, 4000, 4500, 5000, 5500, 6000, 7000, 8000, 9000, 10000\}, considering $\mu=100.00$ and $x_0=(1,\ldots,1)\in\mathbb{R}^n$. Every CPU time is obtained
    averaging the CPU time on 20 instances of size $n$ for each value of $n$.}\label{result_mxhilb}
\label{mxhilb_dimension_average_time}
\end{figure}

\subsection{Strongly Convex RandMaxQuad}\label{sec:num_qcsc_randmaxquad}

We consider the reformulation of the box-constrained optimization problem, commonly used to test methods for nonsmooth convex optimization, introduced in \cite{ds16}. More precisely, we consider the optimization problem
\begin{equation}
\min_{\underline{x} \leq x \leq \overline{x}} \; \tilde{f}_5(x) := \max_{1 \leq i \leq N} \left( x^{\top} A_i x + b_i^{\top} x + \alpha \|x\|_1 \right) + \frac{\mu}{2}\|x\|^2.
\end{equation}
In the numerical experiments, we take $\alpha = 0.5$ and $N = 5$. The matrices $A_i$ are randomly generated symmetric positive definite  with condition number equal to $n$, and $b_i$ are random vectors. The corresponding results are reported in Table \ref{tab_randmaxquad}. Although the performance gains are not large in magnitude, QCSC achieves the best overall performance among the competing methods.

\begin{table}[H]
\centering
\begin{tabular}{|c|c|c|c|c|c|c|}
\hline
 & \multicolumn{2}{c|}{$\mu = 10$} 
 & \multicolumn{2}{c|}{$\mu = 100$} 
 & \multicolumn{2}{c|}{$\mu = 1000$} \\
\hline
\textbf{Method} 
& Time(s) & Iterations 
& Time(s) & Iterations 
& Time(s) & Iterations \\
\hline
LVL         & 7.2816  & 115 & 6.5719 & 117 & 8.2095 & 126 \\
\hline
PBM-1       & 0.8356 & 94 & 1.0898 & 94 & 0.9328 & 96 \\
\hline
QCSC & 0.1066 & 42 & 0.0852 & 22 & 0.0917 & 10 \\
\hline
SUBGRAD RS  & 0.1873 & 1641 & 0.1062 & 572 & 0.3487 & 3129  \\
\hline
U-PB        & 36.5684 & 15817 & 1.0176 & 296 & 1.0119 & 194 \\
\hline
\end{tabular}
\caption{Results for Strongly Convex RandMaxQuad ($\varepsilon = 10^{-4}, n = 10, x_0 = (1,\ldots,1)$).}
\label{tab_randmaxquad}
\end{table}

\subsection{Strongly Convex TiltedNorm}\label{sec:num_qcsc_tiltednorm}

We consider a reformulation of the TiltedNorm problem, a classical nonsmooth optimization problem used to test bundle-type methods (see \cite{skajaaLimitedMemoryBFGS, SagastizabalCompositeProximalBundle2013, ds16}). Considering $X = [-2,2]^n$, we study
\begin{equation}
\tilde{f}_6(x) := \alpha \|Ax\|_2 + \beta e_1^T A x + \frac{\mu}{2}\|x\|^2,
\end{equation}
where $\alpha \geq \beta$ are given parameters, $A$ is a symmetric positive definite matrix and $e_1 = (1,0,\ldots,0)$.
\\
The corresponding results are reported in Table \ref{tab_tilted}. We observe that QCSC achieved the smallest CPU times and number of iterations, while maintaining efficiency in both tested cases.

\begin{table}[H]
\centering
\begin{tabular}{|c|c|c|c|c|}
\hline
 & \multicolumn{2}{c|}{$n = 100;\; \mu=100.0$} 
 & \multicolumn{2}{c|}{$n = 200;\; \mu=1000.0$} \\
\hline
\textbf{Method} 
& Time(s) & Iterations 
& Time(s) & Iterations \\
\hline
LVL         & 11.2699 & 321 & 48.0846 & 479 \\
\hline
PBM-1       & 0.7889 & 186 & 1.7431 & 229 \\
\hline
QCSC & 0.3859 & 101 & 1.4814 & 125 \\
\hline
SUBGRAD RS  & 0.5585 & 7457 & 31.0635 & 85812 \\
\hline
U-PB        & 4.8807 & 2507 & 10.9661 & 2662 \\
\hline
\end{tabular}
\caption{Results for TiltedNorm ($\varepsilon = 10^{2}, \alpha = 4, \beta = 3, x_0 = (1,\ldots,1)$).}
\label{tab_tilted}
\end{table}

\section{Multistage stochastic programs with strongly convex cost functions}\label{sec:pbformass}

We consider multistage stochastic optimization problems (MSPs) of the form
\begin{equation}\label{pbtosolve}
\begin{array}{l} 
\displaystyle{\inf_{x_1,\ldots,x_T}} \; \mathbb{E}_{\xi_2,\ldots,\xi_T}[ \displaystyle{\sum_{t=1}^{T}}\;f_t(x_t(\xi_1,\xi_2,\ldots,\xi_t), x_{t-1}(\xi_1,\xi_2,\ldots,\xi_{t-1}), \xi_t )]\\
x_t(\xi_1,\xi_2,\ldots,\xi_t) \in X_t( x_{t-1}(\xi_1,\xi_2,\ldots,\xi_{t-1}), \xi_t )\;\mbox{a.s.}, \;x_{t} \;\mathcal{F}_t\mbox{-measurable, }t=1,\ldots,T,
\end{array}
\end{equation}
where $x_0$ is given, $\xi_1$ is deterministic,  $(\xi_t)_{t=2}^T$ is a stochastic process, $\mathcal{F}_t$ is the sigma-algebra
$\mathcal{F}_t:=\sigma(\xi_j, j\leq t)$, and $X_t(x_{t-1}, \xi_t ),t=1,\ldots,T$, can be of two types:
\begin{itemize}
\item[(S1)] $X_t( x_{t-1}, \xi_t ) = \{x_t \in \mathbb{R}^n : x_t \in \mathcal{X}_t : x_t \geq 0,\;\;\displaystyle A_{t} x_{t} + B_{t} x_{t-1} = b_t \}$ (in this case, for short, we say that $X_t$ is of type S1); 
\item[(S2)] $X_t( x_{t-1} , \xi_t)= \{x_t \in \mathbb{R}^n : x_t \in \mathcal{X}_t,\;g_t(x_t, x_{t-1}, \xi_t) \leq 0,\;\;\displaystyle A_{t} x_{t} + B_{t} x_{t-1} = b_t \}$.
In this case, for short, we say that $X_t$ is of type S2.
\end{itemize}

In (S1) and (S2) sets
$\mathcal{X}_t$ are convex, nonempty, and compact sets, see Assumption (H1)-(b) below.
For both kinds of constraints, $\xi_t$ contains in particular the random elements in matrices $A_t, B_t$, and vector $b_t$.
Note that a mix of these types of constraints is allowed: for instance we can have $X_1$ of type S1 and $X_2$ of type $S2$.\\

We make the following assumption on $(\xi_t)$:\\
\par (H0) $(\xi_t)$ 
is interstage independent and
for $t=2,\ldots,T$, $\xi_t$ is a random vector taking values in $\mathbb{R}^K$ with a discrete distribution and a
finite support $\Theta_t=\{\xi_{t 1}, \ldots, \xi_{t M}\}$ with $p_{t i}=\mathbb{P}(\xi_{t}=\xi_{t i})>0,i=1,\ldots,M$,
while $\xi_1$ is deterministic.\footnote{To alleviate notation and without loss of generality, we have assumed that the number $M$ of possible realizations
of $\xi_t$, the size $K$ of $\xi_t$, and $n$ of $x_t$ do not depend on $t$.}\\

We will denote by $A_{t j}, B_{t j},$ and $b_{t j}$ the realizations of respectively $A_t, B_t,$ and $b_t$
in $\xi_{t j}$. For this problem, we can write Dynamic Programming equations: assuming that $\xi_1$ is deterministic,
the first stage problem is 
\begin{equation}\label{firststodp}
\mathcal{Q}_1( x_0 ) = \left\{
\begin{array}{l}
\inf_{x_1 \in \mathbb{R}^n} F_1(x_1, x_0, \xi_1) := f_1(x_1, x_0, \xi_1)  + \mathcal{Q}_2 ( x_1 )\\
x_1 \in X_1( x_{0}, \xi_1 )\\
\end{array}
\right.
\end{equation}
for $x_0$ given and for $t=2,\ldots,T$, 
\begin{equation}\label{defqt}
\mathcal{Q}_t( x_{t-1} )= \mathbb{E}_{\xi_t}[ \mathfrak{Q}_t ( x_{t-1},  \xi_{t}  )  ]
\end{equation}
with
\begin{equation}\label{secondstodp} 
\mathfrak{Q}_t ( x_{t-1}, \xi_{t}  ) = 
\left\{ 
\begin{array}{l}
\inf_{x_t \in \mathbb{R}^n}  F_t(x_t, x_{t-1}, \xi_t ) :=  f_t ( x_t , x_{t-1}, \xi_t ) + \mathcal{Q}_{t+1} ( x_t )\\
x_t \in X_t ( x_{t-1}, \xi_t ),
\end{array}
\right.
\end{equation}
with the convention that $\mathcal{Q}_{T+1}$ is null.

We set $\mathcal{X}_0=\{x_0\}$ and make the following assumptions (H1) on the problem data: for $t=1,\ldots,T$,\\
\par (H1)-(a) for every $j=1,\ldots,M$, the function
$f_t(\cdot, \cdot,\xi_{t j})$ is strongly convex on  $\mathcal{X}_t \small{\times} \mathcal{X}_{t-1}$
with constant of strong convexity $\alpha_{t j}>0$ with respect to norm $\|\cdot\|_2$;
\par (H1)-(b) $\mathcal{X}_t$ is nonempty, convex, and compact;
\par (H1)-(c) there exists $\varepsilon_t>0$ such that for every $j=1,\ldots,M$, for every
$x_{t-1} \in \mathcal{X}_{t-1}^{\varepsilon_t}$,
the set $X_t(x_{t-1}, \xi_{t j}) \cap \mbox{ri}( \mathcal{X}_t)$ is nonempty.\\

If $X_t$ is of type $S2$ we additionally assume that:\\
\par (H1)-(d) for $t=1,\ldots,T$, there exists $\tilde \varepsilon_t>0$ such that for every $j=1,\ldots,M$, each component $g_{t i}(\cdot, \cdot, \xi_{t j}), i=1,\ldots,p$, of the function $g_t(\cdot, \cdot, \xi_{t j})$ is 
convex on $\mathcal{X}_t \small{\times} \mathcal{X}_{t-1}^{\tilde \varepsilon_t}$;
\par  (H1)-(e) for $t=2, \ldots, T$, $\forall j=1,\ldots,M$, 
there exists $({\bar x}_{t j t-1}, {\bar x}_{t j t}) \in \mathcal{X}_{t-1} \small{\times} \mbox{ri}( \mathcal{X}_t )$
such that  $A_{t j} {\bar x}_{t j t} + B_{t j} {\bar x}_{t j t-1} = b_{t j}$,
and $({\bar x}_{t j t-1}, {\bar x}_{t j t}) \in \mbox{ri}( \{ g_t(\cdot, \cdot, \xi_{t j}) \leq 0 \} )$. \\

\begin{rem} Consider a problem of form \eqref{pbtosolve} where the strong convexity assumption of 
functions $f_t(\cdot, \cdot,\xi_{t j})$ fails to hold and for every $t, j$ 
function $f_t(\cdot, \cdot,\xi_{t j})$ is convex and  
the columns
of matrix $( A_{t j} \, B_{t j})$ are independent.
In this situation, we may reformulate the problem pushing and penalizing the linear coupling constraints
 in the objective, ending up with the strongly convex cost function
 $f_t(\cdot, \cdot,\xi_{t j}) + \rho_t \|A_{t j} x_t + B_{t j} x_{t-1} - b_{t j}  \|_2^2$
 in variables $(x_t, x_{t-1})$
 for stage $t$ realization $\xi_{t j}$ for some well chosen penalization $\rho_t >0$.
\end{rem}

We now show in the next section that Assumption (H1)-(a) implies that
recourse functions $\mathcal{Q}_t$
are strongly convex.

\subsection{Strong convexity of recourse functions}

Let $f: X \rightarrow \mathbb{R}$ be a function 
defined on a convex subset $X \subset \mathbb{R}^m$.
Let $X \subset \mathbb{R}^m$ and $Y \subset \mathbb{R}^n$ be two nonempty convex sets.
Let $\mathcal{A}$ be a $p \small{\times} n$ real matrix, let
$\mathcal{B}$ be a $p \small{\times} m$ real matrix, let $f: Y \small{\times} X  \rightarrow \mathbb{R}$,
and let $g:Y \small{\times} X  \rightarrow \mathbb{R}^q$.
For $b \in \mathbb{R}^p$, we define the value function
\begin{equation} \label{optclassgeneral0}
\mathcal{Q}(x)= \left\{
\begin{array}{l}
\inf f(y,x)\\
y \in \mathcal{S}(x):=\{y \in Y, \mathcal{A} y + \mathcal{B} x = b, g(y, x) \leq 0\}.
\end{array}
\right.
\end{equation}
SQDP algorithm 
presented in Section \ref{sec:SQDP} is based on Proposition  \ref{strongconvvfunc} below
giving conditions ensuring that $\mathcal{Q}$ is strongly convex:
\begin{prop}\label{strongconvvfunc} Consider value function $\mathcal{Q}$  given by \eqref{optclassgeneral0}.
Assume that (i) $X, Y$ are nonempty and convex sets such that $X \subseteq \mbox{dom}(\mathcal{Q})$ and
$Y$ is closed, (ii) $f, g$ are lower semicontinuous and the components $g_i$ of $g$ are convex functions.
If additionally $f$ is strongly convex on $Y \small{\times} X$ with constant of strong convexity
$\alpha$ with respect to norm $\|\cdot\|$ on $\mathbb{R}^{m+n}$, then
$\mathcal{Q}$ is strongly convex on $X$ with constant of strong convexity $\alpha$
with respect to norm $\|\cdot\|$ on $\mathbb{R}^{m}$. 
\end{prop}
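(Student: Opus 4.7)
The plan is to prove strong convexity of $\mathcal{Q}$ directly from the definition by transferring a convex combination from the parameter $x$ side to the decision $y$ side via feasibility preservation. Fix $x_1, x_2 \in X$ and $\lambda \in [0,1]$, set $x_\lambda := \lambda x_1 + (1-\lambda) x_2$, which lies in $X$ by convexity. The goal is to establish
\[
\mathcal{Q}(x_\lambda) \leq \lambda \mathcal{Q}(x_1) + (1-\lambda) \mathcal{Q}(x_2) - \frac{\lambda(1-\lambda)\alpha}{2}\|x_1 - x_2\|^2.
\]

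Since $X \subseteq \mbox{dom}(\mathcal{Q})$, both $\mathcal{Q}(x_1)$ and $\mathcal{Q}(x_2)$ are finite. For any $\varepsilon > 0$ I would select $\varepsilon$-optimal feasible points $y_i \in \mathcal{S}(x_i)$ with $f(y_i, x_i) \leq \mathcal{Q}(x_i) + \varepsilon$, $i=1,2$, and form $y_\lambda := \lambda y_1 + (1-\lambda) y_2$. The key feasibility step is to verify $y_\lambda \in \mathcal{S}(x_\lambda)$: convexity of $Y$ gives $y_\lambda \in Y$; linearity of the equality constraint yields $\mathcal{A} y_\lambda + \mathcal{B} x_\lambda = \lambda(\mathcal{A} y_1 + \mathcal{B} x_1) + (1-\lambda)(\mathcal{A} y_2 + \mathcal{B} x_2) = b$; and componentwise convexity of $g$ gives $g_i(y_\lambda, x_\lambda) \leq \lambda g_i(y_1, x_1) + (1-\lambda) g_i(y_2, x_2) \leq 0$ for every $i$. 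Hence $y_\lambda$ is admissible for the problem defining $\mathcal{Q}(x_\lambda)$.

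Having established $y_\lambda \in \mathcal{S}(x_\lambda)$, I would apply the $\alpha$-strong convexity of $f$ on $Y \times X$ to the pair $(y_1, x_1), (y_2, x_2)$, obtaining
\[
f(y_\lambda, x_\lambda) \leq \lambda f(y_1, x_1) + (1-\lambda) f(y_2, x_2) - \frac{\lambda(1-\lambda)\alpha}{2}\bigl\|(y_1 - y_2,\, x_1 - x_2)\bigr\|^2.
\]
Using the compatibility of the norms on $\mathbb{R}^{m+n}$ and $\mathbb{R}^{m}$ implicit in the statement, namely that the canonical projection $(y,x) \mapsto x$ is nonexpansive, which holds for natural product norms such as $\|(y,x)\|^2 = \|y\|^2 + \|x\|^2$, I bound $\|(y_1 - y_2, x_1 - x_2)\|^2 \geq \|x_1 - x_2\|^2$. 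Combined with $\mathcal{Q}(x_\lambda) \leq f(y_\lambda, x_\lambda)$ and the $\varepsilon$-suboptimality inequalities $f(y_i,x_i) \leq \mathcal{Q}(x_i)+\varepsilon$, this yields the strong convexity inequality up to an additive error $\varepsilon$, and letting $\varepsilon \to 0$ concludes the proof.

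The main obstacle I anticipate is the handling of the infimum in \eqref{optclassgeneral0}, which need not be attained without further assumptions (for instance compactness of $\mathcal{S}(x)$ together with lower semicontinuity of $f$). The $\varepsilon$-optimal selection sidesteps this gracefully at the cost of an extra limiting step. A secondary subtlety is the abuse of notation $\|\cdot\|$ for norms on two different spaces: the argument requires that the norm on $\mathbb{R}^{m+n}$ dominates the norm on $\mathbb{R}^{m}$ under projection onto the $x$-coordinates, which is harmless for standard Euclidean (or $\ell_p$) product norms but merits an explicit mention.
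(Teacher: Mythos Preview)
Your proposal is correct and follows essentially the same route as the paper's own proof: take $\varepsilon$-optimal points $y_i \in \mathcal{S}(x_i)$, use convexity of the feasible set map to show $y_\lambda \in \mathcal{S}(x_\lambda)$, apply the strong convexity of $f$ together with the projection bound $\|(y_1-y_2,x_1-x_2)\|^2 \ge \|x_1-x_2\|^2$, and let $\varepsilon \to 0$. Your write-up is in fact a bit more explicit than the paper's (which simply invokes ``convexity arguments'' for the feasibility step and leaves the norm-projection inequality implicit), and your remarks on the two subtleties---non-attainment of the infimum and the compatibility of the norms on $\mathbb{R}^{m+n}$ and $\mathbb{R}^m$---are well placed.
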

\begin{proof} Take $x_1, x_2 \in X$ and $\varepsilon>0$.
Since $X \subseteq \mbox{dom}(\mathcal{Q})$ the sets $\mathcal{S}(x_1)$
and $\mathcal{S}(x_2)$ are nonempty.
Our assumptions imply that there are $y_1 \in \mathcal{S}(x_1)$ and  $y_2 \in \mathcal{S}(x_2)$ such that
$\mathcal{Q}( x_1 )\leq  f( y_1, x_1 ) \leq \mathcal{Q}( x_1 )+\varepsilon$ and 
$\mathcal{Q}( x_2 ) \leq f( y_2, x_2 ) \leq \mathcal{Q}( x_2 )+\varepsilon$. Then for every $0 \leq t \leq 1$, by convexity arguments
we have that $t y_1 + (1-t)y_2 \in \mathcal{S}(t x_1 + (1-t) x_2   )$ and therefore
$$
\begin{array}{lll}
\mathcal{Q}(  t x_1 + (1-t) x_2  ) & \leq & f( t y_1 + (1-t) y_2  , t x_1 + (1-t) x_2   )\\
& \leq & t f(y_1 , x_1) + (1-t) f( y_2, x_2 ) - \frac{1}{2} \alpha t (1-t) \| (y_2, x_2)  -   (y_1, x_1)   \|^2 \\
& \leq & t \mathcal{Q}( x_1 ) + (1-t) \mathcal{Q}(x_2) - \frac{1}{2} \alpha t (1-t) \| x_2  -  x_1   \|^2+\varepsilon.
\end{array}
$$
Passing to the limit when $\varepsilon\rightarrow 0$, we obtain
$$
\mathcal{Q}(  t x_1 + (1-t) x_2  ) \leq t \mathcal{Q}( x_1 ) + (1-t) \mathcal{Q}(x_2) - \frac{1}{2} \alpha t (1-t) \| x_2  -  x_1   \|^2,
$$
which completes the proof. \hfill
\end{proof}

\vspace*{0,5cm}

Using Proposition \ref{strongconvvfunc}, we 
easily obtain by induction on $t$ 
that function $\mathfrak{Q}_t(\cdot,\xi_{t j})$
is strongly convex with constant
$\alpha_{t j}$ for norm $\|\cdot\|_2$ which immediately implies, using definition 
\eqref{defqt} of $\mathcal{Q}_t$,
that $\mathcal{Q}_t$ 
is strongly convex for the same norm with
constant of strong convexity
$\sum_{j=1}^M p_{t j}\alpha_{t j}$.

\section{SQDP: extension of QCSC method for strongly convex MSPs}\label{sec:SQDP}

In this section, we introduce SQDP  to solve 
the MSPs with strongly convex cost functions introduced in the previous section. SQDP can be seen as an extension of the popular SDDP method to solve MSPs, which, same as QCSC,
uses quadratic functions (called cuts) to
approximate functions in the objective, instead of the affine cuts used by SDDP.

At iteration $k$,
for $t=2,\ldots,T$,
cost-to-go function $\mathcal{Q}_t$
is approximated by function
$\mathcal{Q}_t^k$ given as a maximum of quadratic functions.

Same as SDDP, SQDP has for each iteration a forward pass followed by a backward pass. In the forward pass,
trial points are obtained sampling a scenario
$(\xi_1,\xi_2^k,\ldots,\xi_T^k)$
for $(\xi_1,\xi_2,\ldots,\xi_T)$
and computing these trial points $x_t^k$, $t=1,\ldots,T-1$, on this scenario replacing the unknown cost-to-go functions
$\mathcal{Q}_{t}$ by 
$\mathcal{Q}_{t}^k$.
In the backward pass, a new quadratic cut is computed for each cost-to-go function
$\mathcal{Q}_t$, $t=2,3,\ldots,T$ at the trial points $x_{t-1}^k$
computed in the forward pass. For short, SQDP is an extension of SDDP where affine lower bounding cuts are replaced by quadratic lower bounding cuts. 

The details of the algorithm, in particular for the computation of the quadratic cuts, is given below.

\noindent\rule[0.5ex]{1\columnwidth}{1pt}
\par  {\textbf{SQDP, Step 1: Initialization.}} For $t=2,\ldots,T$,  take as initial approximations $\mathcal{Q}_t^0 =\mathcal{C}_{t}^0 \equiv -\infty$.  
Let $x_0$ be given, set the iteration count $k$ to 1, and $\mathcal{Q}_{T+1}^0 =\mathcal{C}_{T+1}^0 \equiv 0$. \\
\par {\textbf{SQDP, Step 2: Forward pass.}}  
\par Sample a realization $\xi_1,\xi_2^k,\ldots,\xi_T^k$ of $\xi_1,\xi_2,\ldots,\xi_T$.
\par {\textbf{For }}$t=1,\ldots,T-1$,
\par $\hspace*{0.75cm}$Compute an optimal solution $x_t^k$ of  
\begin{equation} \label{defxtkj}
{\underline{\mathfrak{Q}}}_t^{k-1}( x_{t-1}^k , \xi_t^k ) = \left\{
\begin{array}{l}
\displaystyle \inf_{x_t} \; F_t^{k-1}(x_t , x_{t-1}^k, \xi_t^k):= f_t( x_t , x_{t-1}^k , \xi_{t}^k ) + \mathcal{Q}_{t+1}^{k-1}( x_t ) \\
x_t \in X_t( x_{t-1}^k, \xi_t^k ),
\end{array}
\right.
\end{equation}
\par  $\hspace*{0.75cm}$where $x_{0}^k = x_0$, $\xi_1^k=\xi_1$.
\par {\textbf{End For}}\\
\par  {\textbf{SQDP, Step 3: Backward pass.}}
\par Set $\theta_{T+1}^k=\alpha_{T+1}=0$ and $\beta_{T+1}^k=0$ which defines cut $\mathcal{C}_{T+1}^k \equiv 0$ and function
$\mathcal{Q}_{T+1}^k\equiv 0$.\\
\par {\textbf{For }}$t=T,\ldots,2$,\\
\hspace*{0.8cm}{\textbf{For }}$j=1,\ldots,M,$\\
\hspace*{1.6cm}Compute an optimal  solution $x_{t j}^{k}$ of
\begin{equation}\label{primalpbisddp}
{\underline{\mathfrak{Q}}}_t^k(x_{t-1}^k , \xi_{t j} ) = \left\{
\begin{array}{l}
\displaystyle \inf_{x_t} \;F_t^k(x_t,x_{t-1}^k,  \xi_{t j}):=f_t( x_t , x_{t-1}^k , \xi_{t j}) + \mathcal{Q}_{t+1}^k ( x_t )\\ 
x_t \in X_t(x_{t-1}^k , \xi_{t j}).\\
\end{array}
\right.
\end{equation}
For the problem above, if $X_t$ is of type $S1$ we define the Lagrangian
$$
L(x_t, \lambda) = F_t^k(x_t , x_{t-1}^k ,  \xi_{t j}) +   \lambda^T (A_{t j} x_t + B_{t j} x_{t-1}^k - b_{t j} )$$
and take optimal Lagrange multipliers $\lambda_{t j}^k$.
If $X_t$ is of type $S2$ we define the Lagrangian
$$L(x_t, \lambda, \mu) = F_t^k(x_t , x_{t-1}^k ,  \xi_{t j}) +   \lambda^T (A_{t j} x_t + B_{t j} x_{t-1}^k - b_{t j} ) + 
\mu^T g_t( x_t , x_{t-1}^k , \xi_{t j})$$ and 
take optimal Lagrange multipliers $(\lambda_{t j}^k, \mu_{t j}^k)$.
If $X_t$ is of type $S1$,  denoting by $$\mbox{SG}_{f_t( x_{t j}^k, \cdot , \xi_{t j} )}( x_{t-1}^{k} )$$
a subgradient of convex function 
$f_t( x_{t j}^{k}, \cdot , \xi_{t j}  )$ at $x_{t-1}^{k}$,
we compute $\theta_{t j}^{k} = {\underline{\mathfrak{Q}}}_t^k(x_{t-1}^k , \xi_{t j} )$
and
$$
\beta_{t j}^{k}=\mbox{SG}_{f_t( x_{t j}^{k} ,\cdot , \xi_{t j}  )}( x_{t-1}^{k} )+ B_{t j}^T \lambda_{t j}^k.
$$
If $X_t$ is of type $S2$ denoting by 
$\mbox{SG}_{g_{t i} (x_{t j}^{k}, \cdot , \xi_{t j} )}(x_{t-1}^{k})$ a subgradient of convex function $g_{t i} (x_{t j}^{k} , \cdot , \xi_{t j} )$
at $x_{t-1}^{k}$ we compute $\theta_{t j}^{k} = {\underline{\mathfrak{Q}}}_t^k(x_{t-1}^k , \xi_{t j} )$ and
$$
\beta_{t j}^{k}=\mbox{SG}_{f_t( x_{t j}^{k} ,\cdot , \xi_{t j}  )}( x_{t-1}^{k} )+ B_{t j}^T \lambda_{t j}^k + \sum_{i=1}^p \mu_{t j}^k(i) \mbox{SG}_{g_{t i} ( x_{t j}^{k} ,\cdot , \xi_{t j}  )}(x_{t-1}^{k}).
$$
\hspace*{0.8cm}{\textbf{End For}}\\
\hspace*{0.8cm}The new cut 
$$
\mathcal{C}_t^k( x_{t-1} ) = \theta_t^{k} +  \langle \beta_t^{k}, x_{t-1}-x_{t-1}^{k} \rangle + \frac{\alpha_t}{2}\| x_{t-1}-x_{t-1}^{k} \|_2^2
$$
\hspace*{0.8cm}is obtained for $\mathcal{Q}_t$ at 
$x_{t-1}^{k}$
computing
\begin{equation}\label{formulathetak}
\theta_t^k=\sum_{j=1}^M p_{t j} \theta_{t j}^{k}, \;\beta_{t}^k=\sum_{j=1}^M p_{t j}  \beta_{t j}^{k}
\mbox{ and }\alpha_t = \sum_{j=1}^M p_{t j} \alpha_{t j},
\end{equation}
\hspace*{0.8cm}with the new cost-to-go function for stage $t$ being
\begin{equation}\label{updaterecf}
\mathcal{Q}_t^k=\max(\mathcal{Q}_{t}^{k-1},\mathcal{C}_t^k).
\end{equation}
{\textbf{End For}}\\
\par {\textbf{SQDP, Step 4:}} Do $k \leftarrow k+1$ and go to Step 2.\\
\noindent\rule[0.5ex]{1\columnwidth}{1pt}

We will show in the next section the validity of the cuts $\mathcal{C}_t^k$ computed, i.e., that these cuts are lower bounding functions for $\mathcal{Q}_t$. The model update formula \eqref{updaterecf}
shows that the approximation $\mathcal{Q}_t^k$ of
$\mathcal{Q}_t$ is the maximum of all cuts
$\mathcal{C}_t^1,\mathcal{C}_t^2,\ldots,\mathcal{C}_t^k$ computed for $\mathcal{Q}_t$  up to iteration $k$.

\begin{rem} By change of variable we can write $\mathcal{Q}_t^k$ under the form
$$
\mathcal{Q}_t^k ( x_{t-1} ) = \frac{\alpha_t}{2}\|x_{t-1} - {\bar x}_{t-1}^k\|_2^2 + \max_{1 \leq j \leq k} \;{\tilde \theta}_t^k + \langle \tilde \beta_t^k , x_{t-1} \rangle.
$$
Therefore all subproblems can be reformulated as quadratic programs with a quadratic term multiple of $\|x-\bar x\|_2^2$
and linear constraints (the same number as with SDDP).
\end{rem}

\section{Convergence analysis of SQDP}\label{convanalysis}

In this section we prove the convergence of SQDP. An important ingredient in this proof is to show that the quadratic cuts computed by 
SQDP are valid cuts, i.e., lower bounding functions for cost-to-go functions $\mathcal{Q}_t$.
This is shown in the next proposition
\ref{sqdpvalidcuts}.

To prove this proposition, we will use the following equivalent definition
of strong convexity:
\begin{definition}\label{defsc2}
Let $X \subset \mathbb{R}^m$ be a convex set.
Function $f: X \rightarrow \mathbb{R}$
is strongly convex on $X$ with constant of strong convexity $\alpha > 0$ 
with respect to norm $\|\cdot\|$
if and only {if}
\begin{equation}\label{characscf}
f(y) \geq f(x) + s^T (y-x)   + \frac{\alpha}{2}\|y-x\|^2, \;\forall x, y \in X, \forall s \in \partial f(x).
\end{equation}
\end{definition}
Definition \ref{defsc2} gives inequality \eqref{eq:qf}
used in QCSC.

\begin{prop}\label{sqdpvalidcuts} For every $t=2,\ldots,T+1$, for every $k \geq 1$, we have for every $x_{t-1} \in \mathcal{X}_t$:
\begin{equation}\label{validcut0}
\mathcal{Q}_t(x_{t-1}) \geq \mathcal{C}_t^k(x_{t-1}),\;
\mathcal{Q}_t(x_{t-1}) \geq \mathcal{Q}_t^k(x_{t-1}),\;
\mathcal{Q}_t^k \geq \mathcal{Q}_t^{k-1}.
\end{equation}
\end{prop}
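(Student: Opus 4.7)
The plan is to prove all three inequalities simultaneously by backward induction on $t$ from $T+1$ down to $2$, with an auxiliary (trivial) inner induction on $k$ for the second inequality. The base case $t=T+1$ is immediate, since by the algorithm's initialization $\mathcal{Q}_{T+1}\equiv 0$ and $\mathcal{Q}_{T+1}^k\equiv 0 \equiv \mathcal{C}_{T+1}^k$ for every $k$.

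For the inductive step, assume $\mathcal{Q}_{t+1}^k\le \mathcal{Q}_{t+1}$ on $\mathcal{X}_t$. The first observation I need is that $\mathcal{Q}_{t+1}^k$ is convex: it is a maximum of quadratic cuts $\mathcal{C}_{t+1}^j(\cdot)=\theta_{t+1}^{j}+\langle\beta_{t+1}^{j},\cdot-x_t^{j}\rangle+\tfrac{\alpha_{t+1}}{2}\|\cdot-x_t^{j}\|_2^2$, each of which is convex (in fact $\alpha_{t+1}$-strongly convex). Combined with the strong convexity of $f_t(\cdot,\cdot,\xi_{tj})$ with constant $\alpha_{tj}$ given by (H1)-(a), the objective of the subproblem \eqref{primalpbisddp} defining ${\underline{\mathfrak{Q}}}_t^k(\cdot,\xi_{tj})$ is strongly convex in $(x_t,x_{t-1})$ with constant at least $\alpha_{tj}$. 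Applying Proposition \ref{strongconvvfunc} to this value function (whose hypotheses hold thanks to (H1)-(b)-(e)) yields that ${\underline{\mathfrak{Q}}}_t^k(\cdot,\xi_{tj})$ is strongly convex on $\mathcal{X}_{t-1}$ with constant $\alpha_{tj}$.

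The next step is to argue that the pair $(\theta_{tj}^k,\beta_{tj}^k)$ built in the backward pass satisfies $\theta_{tj}^k={\underline{\mathfrak{Q}}}_t^k(x_{t-1}^k,\xi_{tj})$ and $\beta_{tj}^k\in\partial_{x_{t-1}}{\underline{\mathfrak{Q}}}_t^k(\cdot,\xi_{tj})(x_{t-1}^k)$. This is the usual Lagrangian/sensitivity computation: under the Slater-type conditions (H1)-(c) and (H1)-(e), strong duality holds for \eqref{primalpbisddp}, and a standard chain-rule/envelope argument shows that $\mbox{SG}_{f_t(x_{tj}^k,\cdot,\xi_{tj})}(x_{t-1}^k)+B_{tj}^T\lambda_{tj}^k$ (and, in case (S2), the extra $\sum_i \mu_{tj}^k(i)\,\mbox{SG}_{g_{ti}(x_{tj}^k,\cdot,\xi_{tj})}(x_{t-1}^k)$) is a subgradient of the perturbation function. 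Invoking Definition \ref{defsc2} for the strongly convex function ${\underline{\mathfrak{Q}}}_t^k(\cdot,\xi_{tj})$ at $x_{t-1}^k$ with this subgradient gives
\begin{equation*}
{\underline{\mathfrak{Q}}}_t^k(x_{t-1},\xi_{tj})\;\ge\;\theta_{tj}^k+\langle \beta_{tj}^k,x_{t-1}-x_{t-1}^k\rangle+\tfrac{\alpha_{tj}}{2}\|x_{t-1}-x_{t-1}^k\|_2^2.
\end{equation*}
Since $\mathcal{Q}_{t+1}^k\le\mathcal{Q}_{t+1}$ implies ${\underline{\mathfrak{Q}}}_t^k(\cdot,\xi_{tj})\le\mathfrak{Q}_t(\cdot,\xi_{tj})$, the same inequality holds with $\mathfrak{Q}_t$ on the left. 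Multiplying by $p_{tj}$, summing over $j$, and using \eqref{formulathetak} together with \eqref{defqt} gives $\mathcal{Q}_t(x_{t-1})\ge \mathcal{C}_t^k(x_{t-1})$. The second inequality then follows by a one-line induction on $k$ using the update rule \eqref{updaterecf}: $\mathcal{Q}_t^k=\max(\mathcal{Q}_t^{k-1},\mathcal{C}_t^k)\le\max(\mathcal{Q}_t,\mathcal{Q}_t)=\mathcal{Q}_t$, with base case $\mathcal{Q}_t^0\equiv-\infty$. The monotonicity $\mathcal{Q}_t^k\ge\mathcal{Q}_t^{k-1}$ is immediate from the same max update.

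The main obstacle I anticipate is not the strong convexity of ${\underline{\mathfrak{Q}}}_t^k(\cdot,\xi_{tj})$ nor the induction bookkeeping, but rather a clean justification that the explicit $\beta_{tj}^k$ written in the algorithm truly lies in the subdifferential of the parametric value function ${\underline{\mathfrak{Q}}}_t^k(\cdot,\xi_{tj})$ at the trial point $x_{t-1}^k$. This requires the constraint qualifications (H1)-(c) in case (S1) and (H1)-(d)-(e) in case (S2) to ensure zero duality gap and the existence of Lagrange multipliers, and then a sensitivity argument based on differentiation of the Lagrangian with respect to the parameter $x_{t-1}$, treating the (possibly nonsmooth but convex) functions $f_t$ and $g_{ti}$ via their subdifferentials. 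Once this subgradient identification is established, the strong convexity inequality \eqref{characscf} transfers the extra quadratic term $\tfrac{\alpha_{tj}}{2}\|\cdot-x_{t-1}^k\|_2^2$ from ${\underline{\mathfrak{Q}}}_t^k(\cdot,\xi_{tj})$ into the cut, and the rest of the proof is bookkeeping.
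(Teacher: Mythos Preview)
Your proposal is correct and follows essentially the same approach as the paper: backward induction on $t$ combined with an induction on $k$, using Proposition~\ref{strongconvvfunc} to obtain strong convexity of ${\underline{\mathfrak{Q}}}_t^k(\cdot,\xi_{tj})$, the subgradient identification for $\beta_{tj}^k$ (which the paper delegates to Lemma~2.1 in \cite{guiguessiopt2016}), and then the characterization \eqref{characscf} to produce the quadratic cut. The only cosmetic difference is the nesting order of the two inductions (you take $t$ outer and $k$ inner, the paper takes $k$ outer and $t$ inner), which is immaterial here.
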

\begin{proof}
The relation
$\mathcal{Q}_t^k \geq \mathcal{Q}_t^{k-1}$
is an immediate consequence of the model update step \eqref{updaterecf}.
We now show 
\begin{equation}\label{validcut}
\mathcal{Q}_t(x_{t-1}) \geq \mathcal{C}_t^k(x_{t-1}),\;\; \mathcal{Q}_t(x_{t-1}) \geq \mathcal{Q}_t^k(x_{t-1})
\end{equation}
by forward induction on $k$ and backward induction on $t$.

Note that relations \eqref{validcut}
hold for $k=0$ and
$t=2,\ldots,T+1$.
Assume now that \eqref{validcut}
holds until $k-1$ and for
$t=2,\ldots,T+1$.
Inequality \eqref{validcut} holds for $t=T+1$ (in this case both functions 
$\mathcal{Q}_{T+1}^k$
and $\mathcal{C}_{T+1}^k$ are null) and if it holds for 
$t+1$ 
with  $t \in \{2,\ldots,T\}$, we deduce that for any $x_{t-1} \in \mathcal{X}_{t-1}$ and $x_{t} \in \mathcal{X}_{t}$,  for every $j=1,\ldots,M$, we have
$\mathcal{Q}_{t+1}( x_t ) \geq \mathcal{Q}_{t+1}^k ( x_t )$, 
$\mathfrak{Q}_t( x_{t-1} , \xi_{t j} ) \geq  {\underline{\mathfrak{Q}}}_t^{k}( x_{t-1} , \xi_{t j} )$.
Now note that function $x_t \rightarrow \mathcal{Q}_{t+1}^k(x_t)$ is convex (as a maximum of convex functions) and recalling
that $(x_t,x_{t-1}) \rightarrow f_t(x_t,x_{t-1},\xi_{t j})$ is strongly convex with constant of strong convexity
$\alpha_{t j}$, the function $(x_t,x_{t-1}) \rightarrow f_t(x_t,x_{t-1},\xi_{t j}) +\mathcal{Q}_{t+1}^k( x_t )$ is also
strongly convex with the same parameter of strong convexity. Using Proposition \ref{strongconvvfunc},
it follows that ${\underline{\mathfrak{Q}}}_t^{k}( \cdot , \xi_{t j} )$  is strongly convex  
with constant of strong convexity $\alpha_{t j}$.
Using Lemma 2.1 in Guigues (2016) we have that
$\beta_{t j}^{k} \in \partial  {\underline{\mathfrak{Q}}}_t^k(\cdot , \xi_{t j} )(x_{t-1}^k)$.
Recalling characterization \eqref{characscf} of strongly convex functions,
we get for any $x_{t-1} \in \mathcal{X}_{t-1}$:
\begin{align}
{\underline{\mathfrak{Q}}}_t^{k}( x_{t-1} , \xi_{t j} )  \geq   {\underline{\mathfrak{Q}}}_t^{k}( x_{t-1}^k , \xi_{t j} )+ \langle \beta_{t j}^{k} , x_{t-1} - x_{t-1}^k \rangle + \frac{\alpha_{t j}}{2} \|x_{t-1}- x_{t-1}^k\|_2^2
\end{align} 
and therefore for any $x_{t-1} \in \mathcal{X}_{t-1}$, we have 
\begin{align}\label{checkvalidcut}
\mathcal{Q}_t( x_{t-1} ) & =  \displaystyle \sum_{j=1}^M p_{t j} \mathfrak{Q}_t( x_{t-1} , \xi_{t j} )\\
& \geq   \displaystyle \sum_{j=1}^M p_{t j} {\underline{\mathfrak{Q}}}_t^{k}( x_{t-1} , \xi_{t j} )\\
& \geq  \displaystyle \sum_{j=1}^M p_{t j} \Big(    {\underline{\mathfrak{Q}}}_t^{k}( x_{t-1}^k , \xi_{t j} )
+ \langle \beta_{t j}^{k} , x_{t-1} - x_{t-1}^k \rangle + \frac{\alpha_{t j}}{2} \|x_{t-1} -x_{t-1}^k\|_2^2  \Big)\\
& =  \theta_t^{k} +  \langle \beta_t^{k}, x_{t-1}-x_{t-1}^k \rangle + \frac{\alpha_t}{2}\| x_{t-1}-x_{t-1}^k \|_2^2 \\
& =  \mathcal{C}_t^k (x_{t-1}). 
\end{align} 
Using $\mathcal{Q}_t \geq 
\mathcal{Q}_t^{k-1}$ and 
$\mathcal{Q}_t^k = \max(\mathcal{Q}_t^{k-1},\mathcal{C}_t^k)$, we have
$\mathcal{Q}_t \geq \mathcal{Q}_t^k$, 
which completes the induction step and shows \eqref{validcut} for every $t, k$.
\end{proof}

\vspace*{0.2cm}
We now define a simulation of the
SQDP policy which will compute
decisions for all possible realizations of the uncertainties. For that, we need more notation.

Due to Assumption (H0), the $M^{T-1}$ realizations of $(\xi_t)_{t=1}^T$ form a scenario tree of depth $T+1$
where the root node $n_0$ associated to a stage $0$ (with decision $x_0$ taken at that
node) has one child node $n_1$
associated to the first stage (with $\xi_1$ deterministic).\\
We denote by $\mathcal{N}$ the set of nodes, by
{\tt{Nodes}}$(t)$ the set of nodes for stage $t$ and
for a node $n$ of the tree, we define: 
\begin{itemize}
\item $C(n)$: the set of children nodes (the empty set for the leaves);
\item $x_n$: a decision taken at that node;
\item $p_n$: the transition probability from the parent node of $n$ to $n$;
\item $\xi_n$: the realization of process $(\xi_t)$ at node $n$\footnote{The same notation $\xi_{\tt{Index}}$ is used to denote
the realization of the process at node {\tt{Index}} of the scenario tree and the value of the process $(\xi_t)$
for stage {\tt{Index}}. The context will allow us to know which concept is being referred to.
In particular, letters $n$ and $m$ will only be used to refer to nodes while $t$ will be used to refer to stages.}:
for a node $n$ of stage $t$, this realization $\xi_n$ contains in particular the realizations
$b_n$ of $b_t$, $A_{n}$ of $A_{t}$, and $B_{n}$ of $B_{t}$;
\item $\xi_{[n]}$: the history of the realizations of process $(\xi_t)$ from the first stage node $n_1$ to node $n$:
 for a node $n$ of stage $t$, the $i$-th component of $\xi_{[n]}$ is $\xi_{\mathcal{P}^{t-i}(n)}$ for $i=1,\ldots, t$,
 where $\mathcal{P}:\mathcal{N} \rightarrow \mathcal{N}$ is the function 
 associating to a node its parent node (the empty set for the root node).
\end{itemize}

With this notation, we now provide  the simulation of 
SQDP policy at iteration $k$ which computes
decisions $x_n^k$ for all nodes
$n$ of the scenario tree.\\

\par {\textbf{SQDP, simulation of the policy at iteration $k$.}} 
\par {\textbf{For }}$t=1,\ldots,T$,\\
\hspace*{1.6cm}{\textbf{For }}every node $n$ of stage $t-1$,\\
\hspace*{2.4cm}{\textbf{For }}every child node $m$ of node $n$, compute an optimal solution $x_m^k$ of
\begin{equation} \label{defxtkj}
{\underline{\mathfrak{Q}}}_t^{k-1}( x_n^k , \xi_m ) = \left\{
\begin{array}{l}
\displaystyle \inf_{x_m} \; F_t^{k-1}(x_m , x_n^k, \xi_m):= f_t( x_m , x_n^k , \xi_m ) + \mathcal{Q}_{t+1}^{k-1}( x_m ) \\
x_m \in X_t( x_n^k, \xi_m ),
\end{array}
\right.
\end{equation}
\hspace*{2.6cm}where $x_{n_0}^k = x_0$.\\
\hspace*{2.4cm}{\textbf{End For}}\\
\hspace*{1.6cm}{\textbf{End For}}
\par {\textbf{End For}}

\vspace*{0.2cm}

In Theorem \ref{convsddpsconv} below we show the convergence of SQDP making the following additional assumption:\\

\par (H2) The samples in the backward passes are independent: $(\tilde \xi_2^k, \ldots, \tilde \xi_T^k)$ is a realization of
$\xi^k=(\xi_2^k, \ldots, \xi_T^k) \sim (\xi_2, \ldots,\xi_T)$ 
and $\xi^1, \xi^2,\ldots,$ are independent.\\

\par We will make use of the following lemma:
\begin{lemma} \label{convrecfuncQtS} Let Assumptions (H0) and (H1) hold. Then for $t=2,\ldots,T+1$, function $\mathcal{Q}_t$ is convex and Lipschitz continuous on 
$\mathcal{X}_{t-1}$.
\end{lemma}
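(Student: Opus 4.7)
The natural approach is backward induction on $t$. The base case $t=T+1$ is trivial since $\mathcal{Q}_{T+1}\equiv 0$. For the inductive step, I would assume that $\mathcal{Q}_{t+1}$ is convex and Lipschitz continuous on $\mathcal{X}_t$, and establish the corresponding properties for $\mathcal{Q}_t$ on $\mathcal{X}_{t-1}$.

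For convexity, I would first fix a realization $\xi_{tj}$ and note that the combined objective $(x_t,x_{t-1})\mapsto f_t(x_t,x_{t-1},\xi_{tj})+\mathcal{Q}_{t+1}(x_t)$ is jointly convex, by (H1)-(a) and the induction hypothesis. The joint feasible set $\{(x_t,x_{t-1})\in\mathcal{X}_t\times\mathcal{X}_{t-1}: x_t\in X_t(x_{t-1},\xi_{tj})\}$ is convex, since the coupling $A_{tj}x_t+B_{tj}x_{t-1}=b_{tj}$ is affine and, in case S2, the map $g_t(\cdot,\cdot,\xi_{tj})$ is jointly convex by (H1)-(d). Hence Proposition~\ref{strongconvvfunc} applied with $\alpha=0$ (or, equivalently, the classical parametric-convexity argument) yields convexity of $\mathfrak{Q}_t(\cdot,\xi_{tj})$, and then $\mathcal{Q}_t=\sum_{j=1}^M p_{tj}\,\mathfrak{Q}_t(\cdot,\xi_{tj})$ is convex on $\mathcal{X}_{t-1}$ as a convex combination of convex functions.

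For the Lipschitz property, I would invoke the classical fact from convex analysis that a convex function bounded above on an open set containing a compact convex set is Lipschitz continuous on that compact set. It therefore suffices to show that $\mathcal{Q}_t$ is finite and bounded above on the enlargement $\mathcal{X}_{t-1}^{\varepsilon_t}$. Fix $x_{t-1}\in\mathcal{X}_{t-1}^{\varepsilon_t}$ and $j\in\{1,\ldots,M\}$. By (H1)-(c) (together with (H1)-(d) and (H1)-(e) in the S2 case), there exists $x_t\in X_t(x_{t-1},\xi_{tj})\cap\mathrm{ri}(\mathcal{X}_t)\subset\mathcal{X}_t$. Since $\mathcal{X}_t\times\mathcal{X}_{t-1}^{\varepsilon_t}$ is compact, the strongly convex function $f_t(\cdot,\cdot,\xi_{tj})$ is continuous and hence uniformly bounded there, and by the induction hypothesis $\mathcal{Q}_{t+1}$ is bounded on $\mathcal{X}_t$. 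This provides a uniform upper bound on $\mathfrak{Q}_t(x_{t-1},\xi_{tj})$; averaging over $j$ yields a uniform upper bound on $\mathcal{Q}_t$ over $\mathcal{X}_{t-1}^{\varepsilon_t}$, concluding the Lipschitz step.

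The main obstacle is precisely this finiteness-on-an-open-neighborhood step: Lipschitz continuity of a convex function is entirely a boundary-behavior property, and without the constraint qualifications in (H1)-(c)/(e) the function $\mathcal{Q}_t$ could blow up as $x_{t-1}$ approaches the boundary of its effective domain. The delicate point is that (H1)-(c) only guarantees a feasible point in $\mathrm{ri}(\mathcal{X}_t)$ rather than in $\mathrm{int}(\mathcal{X}_t)$, so when $\mathcal{X}_t$ is lower-dimensional one must pass to the affine hull of $\mathcal{X}_t$ before invoking the standard result (Rockafellar, \emph{Convex Analysis}, Theorem~10.4); once this is done, all other pieces are routine.
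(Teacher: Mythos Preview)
Your backward-induction argument is exactly the standard one: the paper does not actually present a proof of this lemma but simply defers to Lemma~3.2 in \cite{guiguessiopt2016} and Lemma~2.2 in \cite{lecphilgirar12}, and those references proceed precisely as you outline (convexity via partial minimization of a jointly convex objective over a convex graph, Lipschitz continuity via finiteness of the convex function on the $\varepsilon_t$-enlargement $\mathcal{X}_{t-1}^{\varepsilon_t}$ furnished by (H1)-(c)). So your proposal is correct and in line with the intended argument.

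One small caution on the Lipschitz step: you invoke continuity of $f_t(\cdot,\cdot,\xi_{tj})$ on $\mathcal{X}_t\times\mathcal{X}_{t-1}^{\varepsilon_t}$, but (H1)-(a) only asserts strong convexity on $\mathcal{X}_t\times\mathcal{X}_{t-1}$, not on the enlargement. The cited references assume $f_t$ is finite (and convex) on a slightly larger set so that the value function is convex and finite on a relative neighborhood of $\mathcal{X}_{t-1}$; you are implicitly using the same standing assumption, and it is worth stating it explicitly when you carry out the argument.
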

\begin{proof} The proof is analogue to the proofs of Lemma 3.2 in Guigues (2016) and
Lemma 2.2 in Girardeau et al. (2015).\hfill
\end{proof}

\vspace*{0.2cm}
The convergence of SQDP is given in the following theorem.

\begin{thm} \label{convsddpsconv}
Consider the sequences of recourse functions $\mathcal{Q}_ t^k$
generated by SQDP and the decisions $x_n^k$ computed by the simulation of SQDP policy.
Let Assumptions (H0), (H1) and (H2) hold. Then
\begin{itemize}
\item[(i)] almost surely, for $t=2,\ldots,T+1$, the following holds:
$$
\mathcal{H}(t): \;\;\;\forall n \in {\tt{Nodes}}(t-1), \;\; \displaystyle \lim_{k \rightarrow +\infty} \mathcal{Q}_{t}(x_{n}^{k})-\mathcal{Q}_{t}^{k}(x_{n}^{k} )=0.
$$
\item[(ii)]
Almost surely, the limit of the sequence
$( {F}_1^{k-1}(x_{n_1}^k , x_0 ,  \xi_1) )_k$ of the approximate first stage optimal values
and of the sequence
$({\underline{\mathfrak{Q}}}_1^{k}(x_{0}, \xi_1))_k$
is the optimal value 
$\mathcal{Q}_{1}(x_0)$ of \eqref{pbtosolve}.
Let $\Omega=(\Theta_2 \small{\times} \ldots \small{\times} \Theta_T)^{\infty}$ be the sample space
of all possible sequences of scenarios equipped with the product $\mathbb{P}$ of the corresponding 
probability measures. Define on $\Omega$ the random variable 
$x^* = (x_1^*, \ldots, x_T^*)$ as follows. For $\omega \in \Omega$, consider the 
corresponding sequence of decisions $( (x_n^k( \omega ))_{n \in \mathcal{N}} )_{k \geq 1}$
computed by SQDP. Take any accumulation point
$(x_n^* (\omega) )_{n \in \mathcal{N}}$ of this sequence. 
If $\mathcal{Z}_t$ is the set of $\mathcal{F}_t$-measurable functions,
define $x_1^*(\omega),\ldots,x_T^*(\omega)$ taking $x_t^{*}(\omega): \mathcal{Z}_t \rightarrow \mathbb{R}^n$ given by
$x_t^{*}(\omega)( \xi_1, \ldots, \xi_t  )=x_{m}^{*}(\omega)$ where $m$ is given by $\xi_{[m]}=(\xi_1,\ldots,\xi_t)$ for $t=1,\ldots,T$.
Then $\mathbb{P}((x_1^*,\ldots,x_T^*) \mbox{ is an optimal solution to \eqref{pbtosolve}})  =1$.
\end{itemize}
\end{thm}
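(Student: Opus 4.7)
The plan is to prove (i) by backward induction on $t$ using a standard SDDP-style compactness and sampling argument adapted to the quadratic-cut setting, and then deduce (ii) from (i) by a continuity argument on the dynamic programming equations. The ingredients we shall invoke are: validity and monotonicity of the cuts (Proposition \ref{sqdpvalidcuts}); Lipschitz continuity of each $\mathcal{Q}_t$ on the compact set $\mathcal{X}_{t-1}$ (Lemma \ref{convrecfuncQtS}); strong convexity of each subproblem, which guarantees uniqueness and thus continuous dependence of the optimizer on parameters; and the consequence of (H0)--(H2) that, almost surely, every node of the scenario tree is visited by the forward pass infinitely often.

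The base case $t=T+1$ is trivial since both $\mathcal{Q}_{T+1}$ and $\mathcal{Q}_{T+1}^k$ vanish. For the induction step, assume $\mathcal{H}(t+1)$ almost surely and fix $n\in\mathrm{Nodes}(t-1)$. On the almost-sure event that the forward scenarios visit $n$ infinitely often, let $(k_j)$ index these visits. Strong convexity of the stage-$(t-1)$ subproblem makes its minimizer unique, so $x_{t-1}^{k_j}=x_n^{k_j}$. By compactness of $\mathcal{X}_{t-1}$ and of the children feasible sets, extract a subsequence (still denoted $(k_j)$) along which $x_n^{k_j}\to x^\infty$ and each backward-pass solution $x_{tj'}^{k_j}$, $j'=1,\ldots,M$, converges. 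The hypothesis $\mathcal{H}(t+1)$ applied at these limits, combined with Lipschitz continuity of $\mathcal{Q}_{t+1}$ and continuity of $f_t$, yields the parametric-stability statement
\[
{\underline{\mathfrak{Q}}}_t^{k_j}(x_{t-1}^{k_j},\xi_{tj'})\;\longrightarrow\;\mathfrak{Q}_t(x^\infty,\xi_{tj'}),\qquad j'=1,\ldots,M,
\]
hence $\theta_t^{k_j}\to\mathcal{Q}_t(x^\infty)$ by \eqref{formulathetak}. The cuts $\mathcal{C}_t^{k_j}$ dominated by the Lipschitz function $\mathcal{Q}_t$ on the compact $\mathcal{X}_{t-1}$ have slopes $\beta_t^{k_j}$ that are uniformly bounded (they are subgradients of an equi-Lipschitz convex lower approximation at an interior trial point), and a further extraction gives $\beta_t^{k_j}\to\beta^\infty$.

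Let $\mathcal{Q}_t^\infty$ be the pointwise limit of the monotone non-decreasing sequence $(\mathcal{Q}_t^k)$, which exists and satisfies $\mathcal{Q}_t^\infty\le\mathcal{Q}_t$. From $\mathcal{Q}_t^k\ge\mathcal{C}_t^{k_j}$ for $k\ge k_j$ and passing to the limit in $k$ and then in $j$,
\[
\mathcal{Q}_t^\infty(x)\;\ge\;\mathcal{Q}_t(x^\infty)+\langle\beta^\infty,x-x^\infty\rangle+\tfrac{\alpha_t}{2}\|x-x^\infty\|_2^2\qquad\forall x\in\mathcal{X}_{t-1}.
\]
Evaluating at $x=x^\infty$ gives $\mathcal{Q}_t^\infty(x^\infty)\ge\mathcal{Q}_t(x^\infty)$, and combined with $\mathcal{Q}_t^\infty\le\mathcal{Q}_t$ forces $\mathcal{Q}_t^\infty(x^\infty)=\mathcal{Q}_t(x^\infty)$. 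Since $x_n^{k_j}\to x^\infty$ and the sequence $(\mathcal{Q}_t^k)$ is equi-Lipschitz on $\mathcal{X}_{t-1}$ (the max of cuts with uniformly bounded slopes), $\mathcal{Q}_t(x_n^{k_j})-\mathcal{Q}_t^{k_j}(x_n^{k_j})\to 0$. As the argument applies along every convergent sub-subsequence extracted from every visiting subsequence and the difference $\mathcal{Q}_t-\mathcal{Q}_t^k\ge 0$ is monotone non-increasing up to continuity corrections, the full limit along $(k)$ holds, establishing $\mathcal{H}(t)$. The main obstacle is the parametric stability claim ${\underline{\mathfrak{Q}}}_t^{k_j}(x_{t-1}^{k_j},\xi_{tj'})\to\mathfrak{Q}_t(x^\infty,\xi_{tj'})$; strong convexity makes this cleaner than in the classical affine-cut SDDP analysis, as the optimizer is unique and depends continuously on the approximation $\mathcal{Q}_{t+1}^k$.

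For part (ii), apply $\mathcal{H}(2)$ at the deterministic stage-$1$ node $n_1$. By optimality of $x_{n_1}^k$,
$F_1^{k-1}(x_{n_1}^k,x_0,\xi_1)={\underline{\mathfrak{Q}}}_1^{k-1}(x_0,\xi_1)=f_1(x_{n_1}^k,x_0,\xi_1)+\mathcal{Q}_2^{k-1}(x_{n_1}^k)$. Along any convergent subsequence $x_{n_1}^k\to x_1^*$, continuity of $f_1$, Lipschitz continuity of $\mathcal{Q}_2$ and $\mathcal{H}(2)$ yield ${\underline{\mathfrak{Q}}}_1^{k-1}(x_0,\xi_1)\to f_1(x_1^*,x_0,\xi_1)+\mathcal{Q}_2(x_1^*)$; the sandwich $\mathcal{Q}_2^{k-1}\le\mathcal{Q}_2$ plus optimality for the lower-envelope subproblem identifies this limit with $\mathcal{Q}_1(x_0)$. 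Repeating the argument node by node through the tree using $\mathcal{H}(t)$ for $t=2,\ldots,T+1$ shows that every accumulation point $(x_n^*)_{n\in\mathcal{N}}$ of the simulated decisions satisfies, at each node, the true Bellman optimality condition $x_m^*\in\arg\min\{f_t(\cdot,x_n^*,\xi_m)+\mathcal{Q}_{t+1}(\cdot)\,:\,\cdot\in X_t(x_n^*,\xi_m)\}$. A standard measurability construction finally packages the $x_n^*$ into the $\mathcal{F}_t$-measurable policy $(x_1^*,\ldots,x_T^*)$, which by the DP characterization is almost surely optimal for \eqref{pbtosolve}.
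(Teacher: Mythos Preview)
Your overall architecture (backward induction on $t$, sampling/compactness, then (ii) from (i) via the Bellman equations) matches the paper's, but two steps in the induction for (i) do not go through as written.

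\textbf{Misuse of the induction hypothesis.} Your parametric-stability claim
\[
\underline{\mathfrak{Q}}_t^{\,k_j}(x_{t-1}^{k_j},\xi_{tj'})\longrightarrow \mathfrak{Q}_t(x^\infty,\xi_{tj'})
\]
is justified by ``$\mathcal{H}(t+1)$ applied at these limits'', where ``these'' are accumulation points of the \emph{backward-pass} solutions $x_{tj'}^{k_j}$. But $\mathcal{H}(t+1)$ asserts $\mathcal{Q}_{t+1}(x_m^k)-\mathcal{Q}_{t+1}^k(x_m^k)\to 0$ only along the \emph{simulated} decisions $x_m^k$, $m\in{\tt Nodes}(t)$; it says nothing about $\mathcal{Q}_{t+1}^k\to\mathcal{Q}_{t+1}$ at the backward-pass optimizers or at arbitrary accumulation points of them (the backward pass uses $\mathcal{Q}_{t+1}^{k}$, the simulation uses $\mathcal{Q}_{t+1}^{k-1}$, and you have not shown the two optimizers share accumulation points). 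The paper avoids this entirely by the direct chain: for $k\in\mathcal{S}_n$,
\[
0\le \mathcal{Q}_t(x_n^k)-\mathcal{Q}_t^k(x_n^k)
\le \sum_{m\in C(n)} p_m\bigl(\mathfrak{Q}_t(x_n^k,\xi_m)-\underline{\mathfrak{Q}}_t^{\,k-1}(x_n^k,\xi_m)\bigr)
\le \sum_{m\in C(n)} p_m\bigl(\mathcal{Q}_{t+1}(x_m^k)-\mathcal{Q}_{t+1}^{k-1}(x_m^k)\bigr),
\]
where the last step uses that $x_m^k$ is feasible for the true stage-$t$ subproblem and optimal for the approximate one. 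This lands exactly on the simulation points $x_m^k$, where $\mathcal{H}(t+1)$ applies directly (together with the monotone/Lipschitz Lemma~A.1 of \cite{lecphilgirar12} to pass from $\mathcal{Q}_{t+1}^{k}$ to $\mathcal{Q}_{t+1}^{k-1}$). No subsequence extraction, no parametric stability of optimizers, and no boundedness of $\beta_t^{k}$ is needed.

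\textbf{Passage from $k\in\mathcal{S}_n$ to all $k$.} Your sentence ``the difference $\mathcal{Q}_t-\mathcal{Q}_t^k\ge 0$ is monotone non-increasing up to continuity corrections, the full limit along $(k)$ holds'' is not a proof: $\mathcal{Q}_t(x_n^k)-\mathcal{Q}_t^k(x_n^k)$ is \emph{not} monotone in $k$ because $x_n^k$ moves with $k$. For iterations that do not visit $n$, no new cut is created at $x_n^k$, and one genuinely needs the sampling assumption (H2). The paper handles this by invoking a separate probabilistic lemma (Lemma~5.4 in \cite{guilejtekregsddp}, based on the SLLN), using that $\mathcal{S}_n$ has positive density and the approximations are monotone. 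Your argument does not supply an equivalent step.

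Your sketch of (ii) is fine and is essentially what the paper does (it simply cites Theorem~5.3(ii) of \cite{guigues2016isddp}).
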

\begin{proof} Let us prove (i).  Let $\Omega_1$ be the event on the sample space $\Omega$  of sequences
of scenarios such that every scenario is sampled an infinite number of times.
Due to (H2), this event has probability one.
Take an arbitrary  realization $\omega$ of SQDP in $\Omega_1$. 
To simplify notation we will use $x_n^k, \mathcal{Q}_t^k, \theta_t^k, \beta_t^k$ instead 
of $x_n^k(\omega), \mathcal{Q}_t^k(\omega), \theta_t^k(\omega), \beta_t^k( \omega )$.\\
We want to show that $\mathcal{H}(t), t=2,\ldots,T+1$, hold for that realization.
The proof is by backward induction on $t$. For $t=T+1$, $\mathcal{H}(t)$ holds
by definition of $\mathcal{Q}_{T+1}$, $\mathcal{Q}_{T+1}^k$. Now assume that $\mathcal{H}(t+1)$ holds
for some $t \in \{2,\ldots,T\}$. We want to show that $\mathcal{H}(t)$ holds.
Take an arbitrary node $n \in {\tt{Nodes}}(t-1)$. For this node we define 
$\mathcal{S}_n$ the set of iterations such that the sampled scenario passes through node $n$.
Observe that $\mathcal{S}_n$ is infinite because the realization of SQDP is in $\Omega_1$.
We first show that 
$$
\displaystyle \lim_{k \rightarrow +\infty, k \in \mathcal{S}_n } \mathcal{Q}_{t}(x_{n}^{k})-\mathcal{Q}_{t}^{k}(x_{n}^{k} )=0. 
$$
For $k \in \mathcal{S}_n$, we have $x_n^k = x_{t-1}^k$, which implies, using \eqref{validcut0} and the definition of $\mathcal{C}_t^k$, that
\begin{equation}\label{firsteqstosddp}
\mathcal{Q}_t ( x_n^k ) \geq \mathcal{Q}_t^k ( x_n^k ) \geq  \mathcal{C}_t^k ( x_n^k ) = \theta_t^k = \sum_{m \in C(n)} p_m  {\underline{\mathfrak{Q}}}_t^k(x_n^k , \xi_m )
\end{equation}
by definition of $\mathcal{C}_t^k$ and $\theta_t^k$.
It follows that for any $k \in \mathcal{S}_n$ we have
\begin{equation} \label{eqconv1bisfutures}
\begin{array}{lll}
0 \leq \mathcal{Q}_{t}(x_{n}^k) - \mathcal{Q}_{t}^k(x_{n}^k) & \leq & 
\displaystyle \sum_{m \in C(n)} p_m \Big( \mathfrak{Q}_t(x_n^k , \xi_m )  -    {\underline{\mathfrak{Q}}}_t^k(x_n^k , \xi_m )    \Big) \\ 
&  \leq & 
\displaystyle \sum_{m \in C(n)} p_m \Big( \mathfrak{Q}_t(x_n^k , \xi_m )  -    {\underline{\mathfrak{Q}}}_t^{k-1}(x_n^k , \xi_m )    \Big)  \\
&  = & 
\displaystyle \sum_{m \in C(n)} p_m \Big( \mathfrak{Q}_t(x_n^k , \xi_m )  -   F_t^{k-1}(x_m^{k}, x_{n}^k, \xi_m )     \Big)  \\ 
&  = & 
\displaystyle \sum_{m \in C(n)} p_m \Big( \mathfrak{Q}_t(x_n^k , \xi_m )  -   f_t(x_m^{k}, x_{n}^k, \xi_m ) -  \mathcal{Q}_{t+1}^{k-1}( x_m^k )     \Big)  \\ 
&  = & 
\displaystyle \sum_{m \in C(n)} p_m \Big( \mathfrak{Q}_t(x_n^k , \xi_m )  -   F_t(x_m^{k}, x_{n}^k, \xi_m ) + \mathcal{Q}_{t+1}( x_m^k )  -  \mathcal{Q}_{t+1}^{k-1}( x_m^k )     \Big)  \\ 
 & \leq  & \displaystyle \sum_{m \in C(n)} p_m \Big(  \mathcal{Q}_{t+1}( x_m^k ) - \mathcal{Q}_{t+1}^{k-1}( x_m^k ) \Big),
\end{array}
\end{equation}
where for the last inequality we have used the definition of $\mathfrak{Q}_t$  and the fact that $x_m^k \in X_t ( x_n^k , \xi_m )$.

Next, recall that $\mathcal{Q}_{t+1}$ is convex; by Lemma \ref{convrecfuncQtS} functions $(\mathcal{Q}_{t+1}^k)_k$ are Lipschitz continuous;
and for all $k \geq 1$ we have $\mathcal{Q}_{t+1}^k \leq \mathcal{Q}_{t+1}^{k+1} \leq\mathcal{Q}_{t+1}$ on compact set $\mathcal{X}_t$.
Therefore, the induction hypothesis
$$
\lim_{k \rightarrow +\infty} \mathcal{Q}_{t+1}( x_m^k ) - \mathcal{Q}_{t+1}^k ( x_m^k )=0 
$$
implies, using Lemma A.1 in Girardeau et al. (2015), that
\begin{equation}\label{indchypreformulatedsto}
\lim_{k \rightarrow +\infty} \mathcal{Q}_{t+1}( x_m^k ) - \mathcal{Q}_{t+1}^{k-1} ( x_m^k )=0 . 
\end{equation}

Plugging \eqref{indchypreformulatedsto} into
\eqref{eqconv1bisfutures} we obtain 
\begin{equation}\label{mainisddp}
\displaystyle \lim_{k \rightarrow +\infty, k \in \mathcal{S}_n} \mathcal{Q}_{t}(x_{n}^{k})-\mathcal{Q}_{t}^{k}(x_{n}^{k} )=0. 
\end{equation}
It remains to show that
\begin{equation}\label{alsoconvnotinsnisddp}
\displaystyle \lim_{k \rightarrow +\infty, k \notin \mathcal{S}_n } \mathcal{Q}_{t}(x_{n}^{k})-\mathcal{Q}_{t}^{k}(x_{n}^{k} )=0. 
\end{equation}
The relation \eqref{alsoconvnotinsnisddp} above can be proved using Lemma 5.4 in Guigues et al. (2020) which can be applied since 
(A) relation \eqref{mainisddp} holds (convergence was shown for the iterations in $\mathcal{S}_n$),
(B) the sequence $(\mathcal{Q}_t^k)_k$ is monotone, i.e., 
$\mathcal{Q}_t^k \geq \mathcal{Q}_t^{k-1}$ for all $k \geq 1$, (C) Assumption (H2) holds, and
(D) $\xi_{t-1}^k$ is independent on $( (x_{n}^j,j=1,\ldots,k), (\mathcal{Q}_{t}^j,j=1,\ldots,k-1))$.\footnote{Lemma 5.4 in Guigues et al. (2020) is similar to the end of the proof of Theorem 4.1 in Guigues (2016) and uses
the Strong Law of Large Numbers. This lemma itself applies the ideas of the end of the convergence proof of SDDP given in Girardeau (2015), which
was given with a different (more general) sampling scheme in the backward pass.} Therefore, we have shown (i).\\

\par (ii) can be proved as Theorem 5.3-(ii) in Guigues (2016) using (i).\hfill
\end{proof}

\section{Numerical experiments comparing SDDP and SQDP}\label{sec:num}

\subsection{Multistage stochastic programming problems}

Consider the Dynamic Programming equations
given by
$\mathcal{Q}_t( x_{t-1} )=\mathbb{E}_{\xi_t}[\mathfrak{Q}_t(x_{t-1},\xi_t)]$
for $t=1,\ldots,T$, $x_0$ given,
$\mathcal{Q}_{T+1} \equiv 0$, where for
$t=1,\ldots,T$, we have
$$
\mathfrak{Q}_t(x_{t-1},\xi_t) =
\left\{ 
\begin{array}{l}
\displaystyle \inf_{x_t \in \mathbb{R}^n} 
\frac{1}{2}\left( 
\begin{array}{c}
x_{t-1}\\
x_t
\end{array}
\right)^T \left(\xi_t \xi_t^T + \lambda_0 I \right)
\left( 
\begin{array}{c}
x_{t-1}\\
x_t
\end{array}
\right) + 
\xi_t^T \left( 
\begin{array}{c}
x_{t-1}\\
x_t
\end{array}
\right) + \mathcal{Q}_{t+1} ( x_t )\\
x_t \geq 0, \displaystyle \sum_{i=1}^n x_t(i)=1.
\end{array}
\right.
$$

Recourse functions 
$\mathcal{Q}_t$ are strongly convex
with parameter $\lambda_0$ for $\|\cdot\|_2$ and therefore
SQDP and SDDP can be applied to solve this problem.
We run SQDP and SDDP
for several values of 
$T$, $n$ (the common size
of $x_{t-1}$ and $x_t$), $M$ (the number of possible realizations
of $\xi_t$ for every $t$),
and $\lambda_0$. Entries in realizations $\xi_{ti}$ of $\xi_t$ are drawn independently
from the uniform distribution in $[0,1]$.
As in SDDP, we can compute
for SQDP at every iteration a lower
bound LB on the optimal value of the
problem which is the optimal value
of the approximate first stage problem
obtained replacing $\mathcal{Q}_2$
by its approximation $\mathcal{Q}_2^k$.
Similarly to SDDP, we also compute
with SQDP a statistical upper bound UB. 
All iterations are run with
a single scenario in the forward pass.
We use the stopping test
from the numerical experiments
in Guigues et al. (2023)
computing the statistical upper bound from
the total cost (from $t=1$
to $t=T$) on the last 200 scenarios:
the algorithm stops when
(UB-LB)/UB$\leq \varepsilon$
with $\varepsilon=0.1$.

We report in Table \ref{tableres}
the CPU time (in seconds) needed to solve the problem and the lower (LB in the table) and upper (UB in the table) bounds
at termination. 

\begin{table}[H]
\centering
\begin{tabular}{|c|c|c|c|c|c|c|c|}
 \hline
 T &  n & M & $\lambda_0$ & \begin{tabular}{c}SQDP\\ (CPU)\end{tabular} & \begin{tabular}{c}SDDP\\ (CPU)\end{tabular} & SQDP LB/UB & SDDP LB/UB\\
 \hline
10 &50 &10&$10^3$& 704& 1191 &25 474/26 161&25 554/26 521\\
 \hline
5 &50&2&0.1&273&315 &354.72/359.15&354.83/359.31\\
 \hline
 7 &50&2&0.1&463&527&356.03/365.11&355.77/364.25\\
 \hline
  3 &50&2&1&136&155 &376.79/378.55&377.51/378.86\\
 \hline
 4&100&5&$10^5$&84&418 &5.02x$10^5$/5.04x$10^5$&5.02x$10^5$/5.04x$10^5$\\
 \hline
  4&100&5&$10^6$&71&162 &5.004x$10^7$/5.008x$10^7$&5.004x$10^7$/5.009x$10^7$\\
 \hline
  3&500&5&$10^6$&543&1560 &2.5004x$10^8$/2.5004x$10^8$&2.5004x$10^8$/2.5004x$10^8$\\
 \hline
 3&600&5&$10^6$&984&4116 &3.0005x$10^8$/3.0005x$10^8$&3.0005x$10^8$/3.0005x$10^8$\\
 \hline
  3&200&5&$10^5$&141&791 &1.0006x$10^7$/1.0008.x$10^7$&1.0006.x$10^7$/1.0008x$10^7$\\
 \hline
\end{tabular}
\caption{CPU time (in seconds) and upper and lower bounds at termination for several instances with SQDP and SDDP}\label{tableres}
\end{table}

To complement Table \ref{tableres}, we also ran 10 independent instances for each parameter configuration. For each fixed tuple $(T,n,M,\lambda_0)$, Table \ref{table_sqdp_sddp_10_instances} reports the average CPU time of SQDP and SDDP, where the average is taken over all
CPU  times of the 10 runs. This provides a more robust comparison of the computational performance of both methods.

\begin{table}[H]
\centering
\begin{tabular}{|c|c|c|c|c|c|}
\hline
$T$ & $n$ & $M$ & $\lambda_0$ & SQDP (CPU) & SDDP (CPU) \\
\hline
10 & 50  & 10 & $10^3$ & 750 & 1262 \\
\hline
5 & 50  & 2 & $0.1$    & 293 & 303 \\
\hline
7&50&2&0.1&420&537 \\
\hline
3 & 50 & 2 & 1    & 127 & 149\\
\hline
4  & 100 & 5  & $10^5$ & 65 & 587 \\
\hline
4  & 100 & 5  & $10^6$ & 38 & 188 \\
\hline
3  & 500 & 5  & $10^6$ & 454 & 2181 \\
\hline
3  & 600 & 5  & $10^6$ & 636 & 2938 \\
\hline
3  & 200 & 5  & $10^5$ & 105 & 505  \\
\hline
\end{tabular}

\caption{Average CPU time (in seconds) over 10 independent instances for each parameter configuration.}\label{table_sqdp_sddp_10_instances}

\end{table}
The Matlab code of this implementation is available
at \url{https://github.com/vguigues/DASC}
on github. In this implementation, linear and quadratic subproblems in SQDP and SDDP were solved using Gurobi.
The methods were run on an Intel Core i7, 1.8GHz, processor with 12,0 Go of
RAM. We observe convergence of the upper and lower
bounds to the optimal value and at termination close approximate
optimal values for SDDP and SQDP (as a check for
correctness of the implementations).
The CPU times are much smaller with SQDP when the parameter $\lambda_0$
is large; otherwise they are of the same 
order of magnitude.
We indeed expect
that when the constant of strong convexity of $\mathcal{Q}_t$ is large ($\lambda_0$ for our instance) SQDP provides much better approximations of these functions
and will converge quicker (providing "better" approximations and therefore "good" trial points quicker).

\subsection{Two-stage stochastic programs}\label{two_stage_example}

We now consider a practical application of the quadratic cutting-plane strategy proposed in SQDP. Specifically, we study a \emph{two-stage} stochastic linear recourse problem whose expected recourse function is strongly convex on a suitable open convex set. Our goal is to compare the classical stochastic decomposition method \cite{stochasti_decomposition_higle_sen_91} based on affine cuts with a quadratic-cut variant that exploits a valid strong convexity constant.

The example is a simplified component-level recourse model inspired by the multiproduct assembly model of Section~1.3.1 in \cite{shadenrbook}. In this setting, a random product demand is mapped into component requirements through the product structure matrix, and the second-stage cost is defined directly in terms of component imbalance. All quantities in this section are expressed in normalized lot units to simplify the presentation. The resulting problem has the form
\begin{equation}\label{defQ2}
 \min_{x\in X}\; c^\top x + \mathcal{Q}_2(x),
 \qquad \mathcal{Q}_2(x)=\mathbb{E}[\mathfrak{Q}_2(x,D)], 
\end{equation}
where \(x\) is the first-stage component-ordering vector, \(c^\top x\) is the ordering cost, and \(\mathcal{Q}_2\) is the expected recourse cost induced by the random demand vector \(D\) which has continuous distribution. Furthermore, $A^\top D - x$ is the component imbalance vector and $\varphi$ is the recourse penalty function.

To solve this problem, we use the Stochastic Decomposition method of \cite{stochasti_decomposition_higle_sen_91}, which iteratively builds lower approximations of the expected recourse function from sampled scenarios and dual information.  All computations were performed in \textsc{Matlab} using \textsc{Gurobi}.

\subsubsection{A simplified two-stage assembly recourse model}\label{two_stage_example_assembly}

For testing purposes, we consider a small-scale instance with two products and two shared components. The product structure matrix, where product \(1\) consumes one unit of component \(B\) and two units of component \(C\), while product \(2\) consumes three units of component \(B\) and one unit of component \(C\), is
\[
A=
\begin{pmatrix}
1 & 2\\
3 & 1
\end{pmatrix}.
\]

The first-stage decision vector is \(x=(x_B,x_C)\in X=[1.80,2.70]\times[1.35,1.95]\), where \(x_B\) and \(x_C\) denote the quantities of components ordered before uncertainty is revealed. In this problem, \(X\) should be interpreted as a plausible operational range of admissible component orders (to ensure strong
convexity of $\mathcal{Q}_2$
set $X$ cannot contain the origin).

The product demand is modeled by \(D=(D_1,D_2)\sim \mathcal{U}([0,1]^2)\)
with uniform distribution, and the induced component demand is \(Z=A^\top D\). So, we obtain \(Z_B=D_1+3D_2\) and \(Z_C=2D_1+D_2\). The first-stage cost is 
$$c^\top x=1.8x_B+0.8x_C.$$

The second stage cost-function $\mathfrak{Q}_2$
in \eqref{defQ2} is given by
\[
\begin{array}{lcl}
\mathfrak{Q}_2(x,d)
& = &
\min_{u,v\ge 0}
\left\{
\eta^\top u + \pi^\top v
\;:\;
u-v=A^\top d-x
\right\},\\
& = & \eta_B[d_1+3d_2-x_B]^+
+\eta_C[2d_1+d_2-x_C]^+\\
&&
+\pi_B[-d_1-3d_2+x_B]^+
+\pi_C[-2d_1-d_2+x_C]^+,
\end{array}
\]
where \(u\) and \(v\) represent, respectively, component shortages and surpluses, $\eta=(\eta_B,\eta_C)$,
$\pi=(\pi_B,\pi_C)$ and $d=(d_1,d_2)$ denotes a realization of $D$. We take shortage
penalties
$\eta_B=12$ and $\eta_C=7$,
and surplus penalties
$\pi_B=3$ and $\pi_C=3.5$.

We consider the open convex set \(V=(1.75,2.75)\times(1.30,2.00)\), which contains \(X\). Since \(\det(A^\top)=-5\), the transformed random vector \(Z=A^\top D\) has constant density \(r=1/5\) on its support. With \(\rho\approx 0.10365\), the \(\rho\)-neighborhood of \(V\) remains inside the support of \(Z\), and by \cite[Theorem~2.2]{SCHULTZ19943}, \(\mathcal{Q}_2\) is strongly convex on \(V\). The assumptions of the two-stage stochastic decomposition framework in \cite{stochasti_decomposition_higle_sen_91} are also satisfied, so the method applies to this problem.

For the adapted stochastic decomposition method with quadratic cuts, a valid strong convexity constant is required. In this example, we compute this constant ($\kappa$) using the Hessian characterization of strong convexity \cite[Lemma~2.3]{CLAUS_Spurkel_22}. For this model, the Hessian of \(\mathcal{Q}_2\) is diagonal on \(V\) and we obtain $\kappa=5.0$.

The results in Table~\ref{tab:assembly_example_results} correspond to a single run of each method. In this experiment, the quadratic approach achieved the best overall performance: it required fewer iterations, the smallest CPU time, and the lowest estimated expected total cost. This means that it was both computationally faster and better in terms of out-of-sample objective value. The linear method was slower and produced a slightly larger estimated expected total cost.

\begin{table}[ht]
\centering

\small
\setlength{\tabcolsep}{5pt}
\renewcommand{\arraystretch}{1.15}
\begin{tabular}{lccc}
\hline
Method & Iterations & CPU (s) &
\begin{tabular}[c]{@{}c@{}}
Estimated Expected\\
Total Cost
\end{tabular}
\\
\hline
Stochastic Decomposition
& 101
& 0.598025
& 12.76303905
\\
SQDP
& 24
& 0.158909
& 12.73863300
\\
\hline
\end{tabular}
\caption{Computational results for Stochastic Decomposition 
and SQDP
methods. The reported estimated expected total cost is an out-of-sample Monte Carlo estimate of the true objective value.}
\label{tab:assembly_example_results}
\end{table}

\section{Conclusion and extensions}\label{sec:conc}

We  introduced a new method for strongly convex
deterministic optimization problems: QCSC (Quadratic Cuts
for Strongly Convex optimization). We proved the complexity of
this method. 
We extended the idea
of using quadratic approximations in the objective to build models for the recourse functions
of MSPs when the strong convexity assumption holds
for these functions, yielding SQDP algorithm. We also proved
the convergence of SQDP and presented the results of numerical experiments
where SQDP compares favourably with respect to SDDP.

A number of interesting questions arise
from our developments. SQDP applies to the case where recourse functions in MSPs are strongly convex. For linear multistage stochastic programs and discrete uncertainty, this assumption does not hold.
However, for linear two-stage stochastic programs and
continuous distributions, conditions are given in Schultz (1994)
ensuring strong convexity
of the second stage recourse function.
It is natural to study the extension of this result to MSPs: what conditions ensure
strong convexity (and for which  strong convexity parameter) of
recourse functions of linear MSPs having more than 2 stages?
For such problems, another future work will therefore be to extend SQDP
to the case when distributions of $\xi_t$ are continuous. It would also be interesting to apply QCSC to solve
real-life strongly convex
problems 
and SQDP to solve 
other stochastic
two-stage linear (as in Section \ref{two_stage_example}) and multistage strongly convex  optimization problems. \\

\par {\textbf{Competing interests.}} The authors have no competing interests to declare.

\addcontentsline{toc}{section}{References}
\bibliographystyle{plain}
\bibliography{DASC}

\nocite{*}

\newpage

\appendix

\section{Complexity analysis of QCSC}\label{appendix_qcsc_complexity}
This appendix contains the technical lemmas and proofs for the complexity analysis of QCSC in Section~\ref{motivating}. In particular, we establish structural properties of the cutting-plane models \(\Gamma_k\), derive recursive inequalities for the sequence of gaps \(t_k\), provide an upper bound for the initial gap, and conclude with the proof of the complexity theorem.

\vspace*{0.5cm}
 
		The first result below describes some
 basic properties of 
 models $\Gamma_k$.
	
	\begin{lemma}\label{lem:101}
	For every $k \geq 1$, the following statements hold for QCSC:
	    \begin{itemize}
	        \item[a)] for every $\tau \in [0,1]$ we have for every $x \in X$:
	        \begin{equation}
	            \tau \Gamma_{k}(x) + (1-\tau) \left[\ell_f(x,x_k)+\frac{\mu}{2}\|x\|^2\right] \le \Gamma_{k+1}(x) \le \tilde f(x), \label{eq:Gamma_j}
	        \end{equation} 
	        \item[b)] for every $u\in \R^n$, we have
\begin{equation}\label{ineq:Gammaj}
            \Gamma_{k}(u) \ge \Gamma_k(x_k)+ \frac{\mu}{2}\|u-x_k\|^2.
       \end{equation}
	    \end{itemize}
	\end{lemma}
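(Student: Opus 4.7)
The plan is to handle the two parts independently, with part~(a) following from induction and elementary properties of the max operation, and part~(b) following from $\mu$-strong convexity of $\Gamma_k$ combined with first-order optimality of $x_k$.

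For part~(a), I would first establish the upper bound $\Gamma_{k+1} \le \tilde f$ by induction on $k$. The base case $\Gamma_1 = q_{\tilde f}(\cdot,x_0) \le \tilde f$ is immediate from the quadratic-cut inequality~\eqref{eq:qf}. For the inductive step, the recursion $\Gamma_{k+1}(\cdot) = \max\{\Gamma_k(\cdot), q_{\tilde f}(\cdot,x_k)\}$ (which, by the identity~\eqref{reformlincut}, coincides with the update $\max\{\Gamma_k(\cdot), \ell_f(\cdot,x_k)+\tfrac{\mu}{2}\|\cdot\|^2\}$ used in Algorithm~\ref{alg:qcsc2}) preserves the bound since both arguments of the maximum are dominated by $\tilde f$. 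The lower bound in~\eqref{eq:Gamma_j} then follows from the elementary estimate $\max(a,b) \ge \tau a + (1-\tau) b$ for $\tau \in [0,1]$, applied pointwise with $a = \Gamma_k(x)$ and $b = \ell_f(x,x_k)+\tfrac{\mu}{2}\|x\|^2$.

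For part~(b), I would proceed in two steps. First, each cut $q_{\tilde f}(\cdot,x_j)$ is $\mu$-strongly convex, since by~\eqref{eq:qf} it equals an affine function plus $\tfrac{\mu}{2}\|\cdot-x_j\|^2$, whose Hessian is $\mu I$. Hence $\Gamma_k$, being a pointwise maximum of $\mu$-strongly convex functions, is itself $\mu$-strongly convex. Second, since $x_k \in \argmin_{x\in X}\Gamma_k(x)$, the first-order optimality condition provides a subgradient $s \in \partial \Gamma_k(x_k)$ with $-s$ in the normal cone of $X$ at $x_k$, i.e., $\langle s, u - x_k\rangle \ge 0$ for every $u \in X$. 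Combining this with the $\mu$-strong convexity inequality $\Gamma_k(u) \ge \Gamma_k(x_k) + \langle s, u-x_k\rangle + \tfrac{\mu}{2}\|u-x_k\|^2$ delivers~\eqref{ineq:Gammaj}.

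The step I expect to require the most care is the quantifier ``for every $u \in \R^n$'' in~\eqref{ineq:Gammaj}: the argument above directly gives the inequality only for $u \in X$, because for $u \notin X$ the term $\langle s, u-x_k\rangle$ may be negative. The natural resolution is to apply the strong-convexity argument to $\Gamma_k + \delta_X$, which is itself $\mu$-strongly convex and globally (unconstrainedly) minimized at $x_k$: for $u \notin X$ the left-hand side is $+\infty$ and the bound is vacuous, while on $X$ the indicator vanishes and \eqref{ineq:Gammaj} recovers its stated form. This is also the form of the bound needed in the subsequent complexity analysis, where $u$ is instantiated at points of $X$ (typically an optimal solution $x^* \in X^* \subset X$).
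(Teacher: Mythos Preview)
Your proposal is correct and follows essentially the same approach as the paper: induction plus the convex-combination bound $\max(a,b)\ge \tau a+(1-\tau)b$ for part~(a), and strong convexity of a max of $\mu$-convex cuts together with optimality of $x_k$ for part~(b). In fact you are more careful than the paper on one point: the paper's own proof of~(b) only concludes the inequality for $u\in X$, matching exactly the gap you flagged; your indicator-function remedy (or simply restricting to $u\in X$, which is all that is ever used downstream) is the right way to reconcile the stated quantifier with the argument.
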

	\begin{proof}
	    a) The relation 
        $\Gamma_{k+1} \leq \tilde f$ on $X$ immediately follows by induction on $k$ and observing that 
        $$
        \ell_{f}(x,x_k)+\frac{\mu}{2}\|x\|^2 \leq f(x) + \frac{\mu}{2}\|x\|^2=\tilde f(x).
        $$
Finally, the inequality
$ \tau \Gamma_{k}(x) + (1-\tau) \left[\ell_f(x,x_k)+\frac{\mu}{2}\|x\|^2\right] \le \Gamma_{k+1}(x)$
immediately follows from the model update relation $\Gamma_{k+1}(\cdot) \leftarrow \max \aa{\Gamma_{k}(\cdot),\ell_{f}(\cdot,x_k) +\frac{\mu}{2}\|\cdot\|^2}$.

	    b) Since model $\Gamma_k$ is a maximum of strongly convex
        functions, it is a strongly convex function. 
        Using the fact that
        $x_{k} \in \argmin_{x \in X} \Gamma_k(x)$, it follows that
        for every $u\in X$, we have 
        $$
        \Gamma_{k}(u) \ge \Gamma_k(x_k)+ \frac{\mu}{2}\|u-x_k\|^2.
        $$
	\end{proof}
	
	\par The following technical result provides an important recursive formula for sequence $\{\Gamma_k(x_k)\}$
	which is used in Lemma \ref{lem:tj} to give
	a recursive formula for 
 sequence
 $\{t_k\}$.

	\begin{lemma}\label{lem:recur}
		Suppose $\tau \in (0,1)$ satisfies \begin{equation}\label{rel:tau1} 
	        \frac{\tau}{1-\tau} \ge \frac{8(M^2 + \bar \varepsilon L)}{\mu\bar \varepsilon}.
	    \end{equation}
  Then models
  $\Gamma_k$ computed in QCSC satisfy  for every 
  $k \geq 1$,
\begin{equation}\label{ineq:mj1}
		    \Gamma_{k+1}(x_{k+1}) \ge \tau \Gamma_{k}(x_{k}) + (1-\tau) \left[ \ell_f(x_{k+1},x_k) + \frac{\mu}{2}\|x_{k+1}\|^2 + \left(\frac{ L}2+\frac{4 M^2}{\bar \varepsilon}\right) \|x_{k+1}-x_k\|^2 \right].
		\end{equation}
	\end{lemma}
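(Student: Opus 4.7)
The plan is to combine parts (a) and (b) of Lemma \ref{lem:101}, specialized at the new trial point $x_{k+1}$, and then invoke the hypothesis on $\tau$ to absorb a quadratic term. I would not need to touch assumption (A3) or the estimate \eqref{ineq:est} directly; the work is entirely about chaining the two inequalities already established for $\Gamma_k$.

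First, I would instantiate Lemma \ref{lem:101}(a) at $x=x_{k+1}$ to obtain
\begin{equation*}
\Gamma_{k+1}(x_{k+1}) \ge \tau\, \Gamma_{k}(x_{k+1}) + (1-\tau)\left[\ell_f(x_{k+1},x_k)+\frac{\mu}{2}\|x_{k+1}\|^2\right].
\end{equation*}
Then I would apply Lemma \ref{lem:101}(b) with $u=x_{k+1}$ to bound the first summand from below by $\Gamma_k(x_k)+\tfrac{\mu}{2}\|x_{k+1}-x_k\|^2$. Substituting this into the previous display yields
\begin{equation*}
\Gamma_{k+1}(x_{k+1}) \ge \tau\,\Gamma_{k}(x_{k}) + \tau\,\frac{\mu}{2}\|x_{k+1}-x_k\|^2 + (1-\tau)\left[\ell_f(x_{k+1},x_k)+\frac{\mu}{2}\|x_{k+1}\|^2\right].
\end{equation*}

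The remaining task is to show that the term $\tau\,\tfrac{\mu}{2}\|x_{k+1}-x_k\|^2$ is at least as large as $(1-\tau)\bigl(\tfrac{L}{2}+\tfrac{4M^2}{\bar\varepsilon}\bigr)\|x_{k+1}-x_k\|^2$, which is equivalent to the scalar inequality $\tfrac{\tau}{1-\tau}\ge \tfrac{L}{\mu}+\tfrac{8M^2}{\mu\bar\varepsilon}=\tfrac{L\bar\varepsilon+8M^2}{\mu\bar\varepsilon}$. The hypothesis \eqref{rel:tau1} states $\tfrac{\tau}{1-\tau}\ge \tfrac{8(M^2+\bar\varepsilon L)}{\mu\bar\varepsilon}=\tfrac{8M^2+8\bar\varepsilon L}{\mu\bar\varepsilon}$, which is clearly stronger; hence the desired comparison holds. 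Moving this quadratic piece inside the bracket multiplied by $(1-\tau)$ produces exactly \eqref{ineq:mj1}.

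The only mildly delicate point is verifying the scalar inequality from \eqref{rel:tau1}, but it reduces to the bookkeeping noted above, so I do not anticipate any genuine obstacle. The key conceptual step—the one that uses the quadratic cut structure rather than a mere affine one—is the application of Lemma \ref{lem:101}(b), since it is the $\mu$-strong convexity of the model $\Gamma_k$ (inherited from the $(\mu/2)\|\cdot\|^2$ term baked into each cut) that supplies the quadratic gap $\tfrac{\mu}{2}\|x_{k+1}-x_k\|^2$ used to dominate the composite Lipschitz/bounded-subgradient constants $L$ and $M$.
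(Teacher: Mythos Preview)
Your proposal is correct and follows essentially the same route as the paper: both proofs instantiate Lemma~\ref{lem:101}(a) at $x_{k+1}$, feed in Lemma~\ref{lem:101}(b) with $u=x_{k+1}$ to produce the extra $\tfrac{\mu}{2}\|x_{k+1}-x_k\|^2$, and then use the hypothesis \eqref{rel:tau1} (via the weaker consequence $\tfrac{\tau}{1-\tau}\ge \tfrac{1}{\mu}(L+8M^2/\bar\varepsilon)$) to convert that term into $(1-\tau)(\tfrac{L}{2}+\tfrac{4M^2}{\bar\varepsilon})\|x_{k+1}-x_k\|^2$. The paper merely packages the last step by writing $\tau\cdot\tfrac{\mu}{2}=(1-\tau)\cdot\tfrac{\tau}{1-\tau}\cdot\tfrac{\mu}{2}$ inside the bracket before invoking the scalar bound, but the argument is identical.
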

    
	\begin{proof}
First, it immediately follows from the assumption over $\tau$ in~\eqref{rel:tau1} that	    \begin{equation}\label{rel:tau2} 
	        \frac{\tau}{1-\tau} \ge \frac{8(M^2 +  \bar \varepsilon L)}{\mu \bar \varepsilon} \ge \frac{1}{\mu} \left(L + \frac{8  M^2}{\bar \varepsilon}\right).
	    \end{equation}
		Using the lower-bound \eqref{eq:Gamma_j} on $\Gamma_{k+1}$, the fact that $\tau<1$, and \eqref{ineq:Gammaj} with $u=x_{k+1}$, we have
		\begin{align*}
		\Gamma_{k+1}(x_{k+1}) 	&\overset{\eqref{eq:Gamma_j}}{\ge} (1-\tau) [\ell_f(x_{k+1},x_k) + \frac{\mu}{2}\|x_{k+1}\|^2] + \tau \Gamma_{k}(x_{k+1}) \\
		& \overset{\eqref{ineq:Gammaj}}{\ge} (1-\tau) [\ell_f(x_{k+1},x_k) + \frac{\mu}{2}\|x_{k+1}\|^2] + \tau \left( \Gamma_k(x_k) + \frac{\mu}{2} \|x_{k+1} -x_k\|^2 \right)\\
        & =  \tau \Gamma_k(x_k) + (1-\tau)  
        \left[ \ell_f(x_{k+1},x_k) + \frac{\mu}{2}\|x_{k+1}\|^2 + \frac{\tau}{1-\tau} \frac{\mu}{2} \|x_{k+1} -x_k\|^2\right]\\
        & \overset{\eqref{rel:tau2}}{\ge} \tau \Gamma_k(x_k) + (1-\tau)  
        \left[ \ell_f(x_{k+1},x_k) + \frac{\mu}{2}\|x_{k+1}\|^2 + \left(\frac{L}2+\frac{4 M^2}{\bar \varepsilon}\right) \|x_{k+1} -x_k\|^2\right]
		\end{align*}
		which ends the proof.
	\end{proof}
\begin{rem} We could improve
the right-hand side of \eqref{ineq:mj1}
replacing $\frac{L}2$ by
$4 L$. We forced the term
$\frac{L}2$ instead in the right-hand side since it will appear naturally
in subsequent computations. 
\end{rem}
 
	The next result,
 which
	plays an important role in the analysis,
 establishes
	a key recursive formula for the sequence $\{t_k\}$ defined in QCSC.
	
	\begin{lemma}\label{lem:tj}
	Suppose $\tau \in (0,1)$ satisfies \eqref{rel:tau1}.     
	    Then sequence $t_k$ computed by QCSC satisfies
\begin{equation}\label{ineq:tj-recur}
         t_{k+1}-\frac{\bar \varepsilon}4 \le \tau \left(t_k -\frac{\bar \varepsilon}4\right).
     \end{equation}
	\end{lemma}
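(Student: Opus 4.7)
The plan is to bound $t_{k+1}=\tilde f(y_{k+1})-\Gamma_{k+1}(x_{k+1})$ from above by combining the recursion for $\Gamma_{k+1}(x_{k+1})$ from Lemma \ref{lem:recur} with the smoothness-type estimate \eqref{ineq:est} for $f$, and then to absorb the residual linear term in $\|x_{k+1}-x_k\|$ into $\bar\varepsilon/4$ via a one-variable quadratic maximization.

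First I would bound $\tilde f(y_{k+1})$ by a convex combination: since $y_{k+1}$ is the best iterate among $\{x_0,\ldots,x_{k+1}\}$ we have both $\tilde f(y_{k+1})\le\tilde f(y_k)$ and $\tilde f(y_{k+1})\le \tilde f(x_{k+1})$, hence for any $\tau\in[0,1]$,
\[
\tilde f(y_{k+1}) \le \tau\,\tilde f(y_k) + (1-\tau)\,\tilde f(x_{k+1}).
\]
Next, I apply Lemma \ref{lem:recur} to get a matching lower bound on $\Gamma_{k+1}(x_{k+1})$; subtracting these two inequalities yields
\[
t_{k+1} \le \tau\, t_k + (1-\tau)\!\left[\tilde f(x_{k+1})-\ell_f(x_{k+1},x_k)-\tfrac{\mu}{2}\|x_{k+1}\|^2 - \Bigl(\tfrac{L}{2}+\tfrac{4M^2}{\bar\varepsilon}\Bigr)\|x_{k+1}-x_k\|^2\right].
\]

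Now I substitute $\tilde f(x_{k+1}) = f(x_{k+1}) + \tfrac{\mu}{2}\|x_{k+1}\|^2$ (from \eqref{defftildehtilde}), so the $\tfrac{\mu}{2}\|x_{k+1}\|^2$ terms cancel, and then use \eqref{ineq:est} applied to $u=x_{k+1}$, $v=x_k$:
\[
f(x_{k+1})-\ell_f(x_{k+1},x_k) \le 2M\|x_{k+1}-x_k\| + \tfrac{L}{2}\|x_{k+1}-x_k\|^2.
\]
The $\tfrac{L}{2}\|x_{k+1}-x_k\|^2$ cancels with the corresponding term from Lemma \ref{lem:recur} (this is precisely why the remark after Lemma \ref{lem:recur} keeps $L/2$ rather than a tighter constant), leaving
\[
t_{k+1}\le \tau\, t_k + (1-\tau)\!\left[2M\|x_{k+1}-x_k\| - \tfrac{4M^2}{\bar\varepsilon}\|x_{k+1}-x_k\|^2\right].
\]

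The final step is to observe that the scalar quadratic $a\mapsto 2Ma - \tfrac{4M^2}{\bar\varepsilon}a^2$ attains its global maximum $\bar\varepsilon/4$ at $a=\bar\varepsilon/(4M)$, so the bracket is bounded above by $\bar\varepsilon/4$ uniformly in $\|x_{k+1}-x_k\|$. This gives $t_{k+1} \le \tau\, t_k + (1-\tau)\,\bar\varepsilon/4$, which rearranges to \eqref{ineq:tj-recur}. The main (minor) obstacle is the bookkeeping: I have to choose the coefficient of $\|x_{k+1}-x_k\|^2$ in Lemma \ref{lem:recur} exactly matched to the smoothness term in \eqref{ineq:est} \emph{plus} the extra $4M^2/\bar\varepsilon$ needed to complete the square; this is precisely what the condition \eqref{rel:tau1} on $\tau$ enforces through \eqref{rel:tau2}.
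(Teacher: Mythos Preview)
Your proposal is correct and follows essentially the same approach as the paper: both combine Lemma~\ref{lem:recur} with the estimate~\eqref{ineq:est} and the relation $\tilde f=f+\tfrac{\mu}{2}\|\cdot\|^2$, then use the convex-combination bound $\tilde f(y_{k+1})\le\tau\tilde f(y_k)+(1-\tau)\tilde f(x_{k+1})$ and complete the square to absorb the residual $2M\|x_{k+1}-x_k\|$ term into $(1-\tau)\bar\varepsilon/4$. The only cosmetic difference is the order of the steps and that the paper phrases the final quadratic step via $a^2-2ab\ge -b^2$ with $a=2M\|x_{k+1}-x_k\|$, $b=\bar\varepsilon/2$, which is exactly your maximization of $a\mapsto 2Ma-\tfrac{4M^2}{\bar\varepsilon}a^2$.
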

	
	\begin{proof} 
        Using \eqref{ineq:est} with $(u,v)=(x_{k+1},x_k)$ we get 
        \begin{equation} \ell_f(x_{k+1},x_k)  + \frac{ L}2 \|x_{k+1}-x_k\|^2 + 2 M \|x_{k+1}-x_k\| \ge  f(x_{k+1}). \end{equation}
      Combining this relation with
      $\tilde f(x)=f(x)+(\mu/2)\|x\|^2$, we have        \begin{equation}\label{ineq:ellphi}
            \ell_f(x_{k+1},x_k) + \frac{\mu}{2}\|x_{k+1}\|^2  + \frac{ L}2 \|x_{k+1}-x_k\|^2\ge \tilde f(x_{k+1}) - 2 M \|x_{k+1}-x_k\|.
        \end{equation}
        This inequality and \eqref{ineq:mj1} imply that
		\begin{align}
			\Gamma_{k+1}(x_{k+1}) - \tau \Gamma_{k}(x_{k})
			&\overset{\eqref{ineq:mj1}}{\ge} (1-\tau) \left[ \ell_f(x_{k+1},x_k) + \frac{\mu}{2}\|x_{k+1}\|^2 +   \left(\frac{L}2 +\frac{4 M^2}{\bar \varepsilon}\right) \|x_{k+1}-x_k\|^2 \right] \nn \\
			&\overset{\eqref{ineq:ellphi}}{\ge} (1-\tau) \left[ \tilde f(x_{k+1}) - 2  M \|x_{k+1}-x_k\| +  \frac{4 M^2}{\bar \varepsilon} \|x_{k+1}-x_k\|^2 \right] \nn\\
   & = (1-\tau) {\tilde f}(x_{k+1}) + \frac{1-\tau}{\bar \varepsilon}\left(4  M^2\|x_{k+1} -x_k\|^2 - 2 M \bar \varepsilon\|x_{k+1} -x_k\|\right) \nn \\
			&\ge   (1-\tau) {\tilde f}(x_{k+1}) -  \frac{(1-\tau)\bar \varepsilon}{4}, \label{ineq:mj2}
		\end{align}		
		where the last inequality is due to the inequality
		$ a^2-2ab \ge - b^2$ with $a=2 M \|x_{k+1}-x_k\|$ and $b=\bar \varepsilon/2$.
		Using the above inequality and the definitions of $y_{k}$ and $ t_{k}$, respectively, we conclude that
        \begin{align*}
            t_{k+1} & =  \tilde f(y_{k+1}) - \Gamma_{k+1}(x_{k+1}) \overset{\eqref{ineq:mj2}}{\le} \tilde f(y_{k+1}) -\tau \Gamma_k(x_k) - (1-\tau) \tilde f(x_{k+1}) + \frac{(1-\tau)\bar \varepsilon}{4} \\
            & = \tilde f(y_{k+1}) -\tau [\tilde f(y_k)-t_k] - (1-\tau) \tilde f(x_{k+1}) + \frac{(1-\tau)\bar \varepsilon}{4} \\
            & \leq  \tau t_k + \frac{(1-\tau)\bar \varepsilon}{4},
        \end{align*}
        where in the last inequality we have used the fact that
        $$
        \tilde f(y_{k+1}) \leq \tau \tilde f(y_{k}) + (1-\tau)\tilde f(x_{k+1}) 
        $$
        which holds by definition of $y_k$.
        Therefore, the lemma holds.
	\end{proof}
		\vspace*{0.5cm}
	The next lemma gives a uniform bound on $t_{1}$.
    \begin{lemma}\label{lem:t1} 
    Assume that $X$
    is bounded with diameter at most 
    $D$. Then we have $ t_{1}\le \bar t(D)$
		where
        $\bar t(D)$ is given by
        \eqref{def:bar t}.
	\end{lemma}
	
	\begin{proof}
	    Using relation 
       $\Gamma_{1}(\cdot)= \ell_f(\cdot,x_{0})+\frac{\mu}{2}\|\cdot\|^2$, we have
	    \begin{align*}
	        t_{1} &=\tilde f(y_{1})- \Gamma_1(x_1)
         \leq 
f(x_{1}) + \frac{\mu}{2}\|x_1\|^2 
         -\ell_f(x_1,x_{0})-\frac{\mu}{2}\|x_1\|^2
	        = f(x_{1}) - \ell_f(x_{1},x_{0}) \\
	        & \le  2 M \|x_{1}-x_{0}\| + \frac{L}{2} \|x_{1}-x_{0}\|^2
	        \le M^2 + \left( \frac{L}{2}+1\right)\|x_{1}-x_{0}\|^2 \leq M^2 + 
       \left( \frac{{L}}{2} +1\right) D^2
	    \end{align*}
		where the second inequality is due to \eqref{ineq:est} with $(u,v)=(x_{1}, x_{0})$, and the third inequality is due to the fact that $2ab \le a^2+b^2$ for every $a,b \in \R$.
	\end{proof}

\vspace*{0.2cm}

We are now ready to prove Theorem \ref{complcssc}.\\

\par {\textbf{Proof of Theorem \ref{complcssc}.}}
    Define $\tau \in (0,1)$ by
\begin{equation}\label{partchtau}
\frac{1}{\tau}=1+\frac{\mu \bar \varepsilon}{8(M^2+\bar \varepsilon L)}. 
\end{equation}
Observe that for this choice of $\tau$, relation
$$
  \frac{\tau}{1-\tau} \ge \frac{8(M^2 + \bar \varepsilon L)}{\mu\bar \varepsilon}
$$
holds and therefore Lemmas
\ref{lem:recur} and
\ref{lem:tj} can be applied. In particular,
relation \eqref{ineq:tj-recur} holds for this value of $\tau$.

Using Lemma \ref{lem:tj}
and the relation $\tau \leq e^{\tau-1}$, we get
for every $k \geq 1$:
\begin{align}
    t_k  & \leq  \frac{\bar \varepsilon}{4} +\tau^{k-1}(t_1-\frac{\bar \varepsilon}{4})
    & \text{by induction on \eqref{ineq:tj-recur}} \nonumber\\
 & \leq \frac{\bar \varepsilon}{4} +\tau^{k-1} \bar t(D) & \text{by Lemma \ref{lem:t1}} \nonumber\\
 & \leq \frac{\bar \varepsilon}{4}
 +e^{(\tau-1)(k-1)} \bar t(D) & \text{since } \tau \leq e^{\tau-1}.\label{tjfinal}
\end{align}
Further, 
$$ \tau-1 = \frac{- \frac{\mu \bar \varepsilon}{8(M^2+\bar \varepsilon L)}}{1+\frac{\mu \bar \varepsilon}{8(M^2+\bar \varepsilon L)}}
= - \frac{1}{1+\frac{8(M^2+\bar \varepsilon L)}{\mu \bar \varepsilon}}.$$
So for $k \geq 1+\left(1 + \frac{8(M^2+\bar \varepsilon L)}{\mu \bar \varepsilon}\right)\log(4\bar t(D) / (3 \bar \varepsilon))$, we have 
$
(k-1)(\tau-1) \leq \log(3 \bar \varepsilon/(4 \bar t(D)))
$ or equivalently
$\exp((\tau-1)(k-1)) \leq 3 \bar \varepsilon/(4\bar t(D))$, which plugged into
\eqref{tjfinal} gives
$t_k \leq \frac{\bar \varepsilon}{4} + \frac{3\bar \varepsilon}{4} = \bar \varepsilon$.

Finally, for $k \geq 1+\left(1 + \frac{8(M^2+\bar \varepsilon L)}{\mu \bar \varepsilon}\right)\log(4\bar t(D) / (3 \bar \varepsilon))$, we also have
$$
0 \leq \tilde f(y_k)-\tilde f_* \leq 
\tilde f(y_k)-\Gamma_k(x_k) =t_k \leq \bar \varepsilon
$$
and $y_k \in X$ is an $\bar \varepsilon$-optimal solution.$\hfill \smallblacksquare$

\end{document}